%bialgebra algebra structure on free Rota-Baxter algebras in bracketed words
%================================================================
\documentclass[12pt]{amsart}       %was 12pt
\usepackage{txfonts}
\usepackage{amssymb}
\usepackage{eucal}
\usepackage{graphicx}
\usepackage{amsmath}
\usepackage{amscd}
\usepackage[all]{xy}           %xypic macro for latex2.09
\usepackage{tikz}
\usepackage{amsfonts,latexsym}
\usepackage{xspace}
\usepackage{epsfig}
\usepackage{float}
\usepackage{axodraw}
\usepackage{color}
\usepackage{fancybox}
\usepackage{colordvi}
\usepackage{multicol}
\usepackage{colordvi}
\usepackage[colorlinks,final,backref=page,hyperindex,hypertex]{hyperref}
\usepackage[active]{srcltx} %SRC Specials for DVI Searching

%======================================================================
    %was    1, 1.5 for double sp
%======================================================================
%%standard setting
%\topmargin -0.3truein \textheight 8.4truein \oddsidemargin 0.2truein
%\evensidemargin 0.2truein \textwidth 440pt
%======================================================================
%%little larger standard setting: good setting
\topmargin -.8cm \textheight 22.8cm \oddsidemargin 0cm \evensidemargin -0cm \textwidth 16.3cm
%========================================================================================%%wide
%%lower setting setting for 1920x1080
%\topmargin -.9cm \textheight 21cm \oddsidemargin 0cm \evensidemargin -0cm \textwidth 16.3cm
%========================================================================================%%wide
%%lower wider setting setting for 1920x1050
%\topmargin -.9cm \textheight 20cm \oddsidemargin 0cm \evensidemargin 0cm \textwidth 17cm
%======================================================================
%%Surface
%\topmargin -.8cm \textheight 19.5cm \oddsidemargin 0cm \evensidemargin -0cm \textwidth 16.3cm
%======================================================================
%%wide %% small font, fit window
%\topmargin -3.3cm \textheight 27.5cm \oddsidemargin -2cm
%\evensidemargin -2cm \textwidth 20cm
%%%%%%%%%%%%%%%=======================================================
%%wide note setting, no margin
%\topmargin -1.6cm \textheight 25cm \oddsidemargin -0.9cm
%\textwidth 19cm \evensidemargin -0.9cm
%======================================================================
%%print narrow note setting
%\topmargin -0.5truein \textheight 9.8truein \oddsidemargin -0.7truein \evensidemargin -0.7truein \textwidth 340pt
%======================================================================

%\makeatletter

%\begin{document}  %for latex 2.09
%\input amssym.def %
%\input amssym      %

%%%%%%%%%%%%%%%%%%%%%%%% Statements
\newtheorem{theorem}{Theorem}[section]
\newtheorem{prop}[theorem]{Proposition}
\theoremstyle{definition}
\newtheorem{defn}[theorem]{Definition}
\newtheorem{lemma}[theorem]{Lemma}

\newtheorem{prop-def}{Proposition-Definition}[section]
\newtheorem{coro-def}{Corollary-Definition}[section]

\newtheorem{exam}{Example}[section]

%==========================================================================

\newcommand{\nc}{\newcommand}
%==========================================================================
\nc{\tred}[1]{\textcolor{red}{#1}}
\nc{\tblue}[1]{\textcolor{blue}{#1}}
\nc{\tgreen}[1]{\textcolor{green}{#1}}
\nc{\tpurple}[1]{\textcolor{purple}{#1}}
\nc{\btred}[1]{\textcolor{red}{\bf #1}}
\nc{\btblue}[1]{\textcolor{blue}{\bf #1}}
\nc{\btgreen}[1]{\textcolor{green}{\bf #1}}
\nc{\btpurple}[1]{\textcolor{purple}{\bf #1}}
\nc{\NN}{{\mathbb N}}
\nc{\ncsha}{{\mbox{\cyr X}^{\mathrm NC}}} \nc{\ncshao}{{\mbox{\cyr
X}^{\mathrm NC}_0}}

\renewcommand{\frak}{\mathfrak}
%=========================================================================

\newcommand{\efootnote}[1]{}
%========================================================================

\renewcommand{\textbf}[1]{}
%========================================================================

\newcommand{\delete}[1]{}

%\delete{
\nc{\mlabel}[1]{\label{#1}}  % Use this to suppress names
\nc{\mcite}[1]{\cite{#1}}  % Use this to suppress names
\nc{\mref}[1]{\ref{#1}}  % Use this to suppress names
\nc{\mbibitem}[1]{\bibitem{#1}} % Use this to show number name
%}

\delete{
\nc{\mlabel}[1]{\label{#1}  % Use the next two lines to show names
{\hfill \hspace{1cm}{\small\tt{{\ }\hfill(#1)}}}}
\nc{\mcite}[1]{\cite{#1}{\small{\tt{{\ }(#1)}}}}  % Use this lines to show names
\nc{\mref}[1]{\ref{#1}{{\tt{{\ }(#1)}}}}  % Use this lines to show names
\nc{\mbibitem}[1]{\bibitem[\bf #1]{#1}} % Use this to show name
}

%=========================================================================

%=========================================================================
%%%%%%%%%%%%%%%%%%%% new symbols
\nc{\opa}{\ast} \nc{\opb}{\odot} \nc{\op}{\bullet} \nc{\pa}{\frakL}
\nc{\arr}{\rightarrow} \nc{\lu}[1]{(#1)} \nc{\mult}{\mrm{mult}}
\nc{\diff}{\mathfrak{Diff}}
\nc{\opc}{\sharp}\nc{\opd}{\natural}
\nc{\ope}{\circ}
\nc{\dpt}{\mathrm{d}}
%%%%%%%%%%%%%%%%%%%%%%% symbols
\nc{\diam}{alternating\xspace}
\nc{\Diam}{Alternating\xspace}
\nc{\cdiam}{alternating\xspace}
\nc{\Cdiam}{Alternating\xspace}
\nc{\AW}{\mathcal{A}}
\nc{\rba}{Rota-Baxter algebra\xspace}

\nc{\ari}{\mathrm{ar}}

\nc{\lef}{\mathrm{lef}}

\nc{\Sh}{\mathrm{ST}}

\nc{\Cr}{\mathrm{Cr}}

\nc{\st}{{Schr\"oder tree}\xspace}
\nc{\sts}{{Schr\"oder trees}\xspace}

\nc{\vertset}{\Omega} % set of vertex decorations

\nc{\assop}{\quad \begin{picture}(5,5)(0,0)
\line(-1,1){10}
\put(-2.2,-2.2){$\bullet$}
\line(0,-1){10}\line(1,1){10}
\end{picture} \quad \smallskip}

\nc{\operator}{\begin{picture}(5,5)(0,0)
\line(0,-1){6}
\put(-2.6,-1.8){$\bullet$}
\line(0,1){9}
\end{picture}}

\nc{\idx}{\begin{picture}(6,6)(-3,-3)
\put(0,0){\line(0,1){6}}
\put(0,0){\line(0,-1){6}}
 \end{picture}}

\nc{\pb}{{\mathrm{pb}}}
\nc{\Lf}{{\mathrm{Lf}}}

\nc{\lft}{{left tree}\xspace}
\nc{\lfts}{{left trees}\xspace}

\nc{\fat}{{fundamental averaging tree}\xspace}

\nc{\fats}{{fundamental averaging trees}\xspace}
\nc{\avt}{\mathrm{Avt}}

\nc{\rass}{{\mathit{RAss}}}

\nc{\aass}{{\mathit{AAss}}}

\nc{\vin}{{\mathrm Vin}}    %decoration set of indices
\nc{\lin}{{\mathrm Lin}}    %decoration set of leaves
\nc{\inv}{\mathrm{I}n}
\nc{\gensp}{V} % space of generators
\nc{\genbas}{\mathcal{V}} % basis of the space of generators
\nc{\bvp}{V_P}     % Rota-Baxter generating space
\nc{\gop}{{\,\omega\,}}     % generic binary operation

\nc{\bin}[2]{ (_{\stackrel{\scs{#1}}{\scs{#2}}})}  %binomial coeff
\nc{\binc}[2]{ \left (\!\! \begin{array}{c} \scs{#1}\\
    \scs{#2} \end{array}\!\! \right )}  %binomial coeff
\nc{\bincc}[2]{  \left ( {\scs{#1} \atop
    \vspace{-1cm}\scs{#2}} \right )}  %binomial coeff
\nc{\bs}{\bar{S}} \nc{\cosum}{\sqsubset} \nc{\la}{\longrightarrow}
\nc{\rar}{\rightarrow} \nc{\dar}{\downarrow} \nc{\dprod}{**}
\nc{\dap}[1]{\downarrow \rlap{$\scriptstyle{#1}$}}
\nc{\md}{\mathrm{dth}} \nc{\uap}[1]{\uparrow
\rlap{$\scriptstyle{#1}$}} \nc{\defeq}{\stackrel{\rm def}{=}}
\nc{\disp}[1]{\displaystyle{#1}} \nc{\dotcup}{\
\displaystyle{\bigcup^\bullet}\ } \nc{\gzeta}{\bar{\zeta}}
\nc{\hcm}{\ \hat{,}\ } \nc{\hts}{\hat{\otimes}}
\nc{\barot}{{\otimes}} \nc{\free}[1]{\bar{#1}}
\nc{\uni}[1]{\tilde{#1}} \nc{\hcirc}{\hat{\circ}} \nc{\lleft}{[}
\nc{\lright}{]} \nc{\lc}{\lfloor} \nc{\rc}{\rfloor}
\nc{\curlyl}{\left \{ \begin{array}{c} {} \\ {} \end{array}
    \right .  \!\!\!\!\!\!\!}
\nc{\curlyr}{ \!\!\!\!\!\!\!
    \left . \begin{array}{c} {} \\ {} \end{array}
    \right \} }
\nc{\longmid}{\left | \begin{array}{c} {} \\ {} \end{array}
    \right . \!\!\!\!\!\!\!}
\nc{\onetree}{\bullet} \nc{\ora}[1]{\stackrel{#1}{\rar}}
\nc{\ola}[1]{\stackrel{#1}{\la}}%${\Bbb Z}$
\nc{\ot}{\otimes} \nc{\mot}{{{\boxtimes\,}}}
\nc{\otm}{\overline{\boxtimes}} \nc{\sprod}{\bullet}
\nc{\scs}[1]{\scriptstyle{#1}} \nc{\mrm}[1]{{\rm #1}}
\nc{\margin}[1]{\marginpar{\rm #1}}   %{\rm #1}}
\nc{\dirlim}{\displaystyle{\lim_{\longrightarrow}}\,}
\nc{\invlim}{\displaystyle{\lim_{\longleftarrow}}\,}
\nc{\mvp}{\vspace{0.3cm}} \nc{\tk}{^{(k)}} \nc{\tp}{^\prime}
\nc{\ttp}{^{\prime\prime}} \nc{\svp}{\vspace{2cm}}
\nc{\vp}{\vspace{8cm}} \nc{\proofbegin}{\noindent{\bf Proof: }}
%\nc{\proofbegin}{\begin{proof}} % AMS command
\nc{\proofend}{$\blacksquare$ \vspace{0.3cm}}
%\nc{\proofend}{\end{proof}} %AMS command
\nc{\modg}[1]{\!<\!\!{#1}\!\!>}
%\nc{\intg}[1]{\lceil{#1}\rceil}  %old free int ring
\nc{\intg}[1]{F_C(#1)} \nc{\lmodg}{\!
<\!\!} \nc{\rmodg}{\!\!>\!}
\nc{\cpi}{\widehat{\Pi}}
%\nc{\sha}{\scs{\mbox{\cyr X}}} %used to be \cyr
\nc{\sha}{{\mbox{\cyr X}}}  %used to be \cyr
\nc{\shap}{{\mbox{\cyrs X}}} %sha as product
\nc{\shpr}{\diamond}    %Shuffle product
\nc{\shp}{\ast} \nc{\shplus}{\shpr^+}
\nc{\shprc}{\shpr_c}    %Cartier's product
\nc{\msh}{\ast} \nc{\zprod}{m_0} \nc{\oprod}{m_1}
\nc{\vep}{\varepsilon} \nc{\labs}{\mid\!} \nc{\rabs}{\!\mid}
\nc{\sqmon}[1]{\langle #1\rangle}
%==========================================================================

%==========================================================================
%%%%%%%%%%%%%%%%%%%% roman fonts, in alphabetic order
\nc{\mmbox}[1]{\mbox{\ #1\ }} \nc{\dep}{\mrm{dep}} \nc{\fp}{\mrm{FP}}
\nc{\rchar}{\mrm{char}} \nc{\End}{\mrm{End}} \nc{\Fil}{\mrm{Fil}}
\nc{\Mor}{Mor\xspace} \nc{\gmzvs}{gMZV\xspace}
\nc{\gmzv}{gMZV\xspace} \nc{\mzv}{MZV\xspace}
\nc{\mzvs}{MZVs\xspace} \nc{\Hom}{\mrm{Hom}} \nc{\id}{\mrm{id}}
\nc{\im}{\mrm{im}} \nc{\incl}{\mrm{incl}} \nc{\map}{\mrm{Map}}
\nc{\mchar}{\rm char} \nc{\nz}{\rm NZ} \nc{\supp}{\mathrm Supp}

%=======================================================================
%%%%%%%%%%%%%%%%%% bold face
\nc{\Alg}{\mathbf{Alg}} \nc{\Bax}{\mathbf{Bax}} \nc{\bff}{\mathbf f}
\nc{\bfk}{{\bf k}} \nc{\bfone}{{\bf 1}} \nc{\bfx}{\mathbf x}
\nc{\bfy}{\mathbf y}
\nc{\base}[1]{\bfone^{\otimes ({#1}+1)}} %{{a_{#1}}}
\nc{\Cat}{\mathbf{Cat}}

%\nc{\cat}{\sqsubset}
\nc{\detail}{\marginpar{\bf More detail}
    \noindent{\bf Need more detail!}
    \svp}
\nc{\Int}{\mathbf{Int}} \nc{\Mon}{\mathbf{Mon}}
%\nc{\remark}{\noindent{\bf Remark: }}
\nc{\rbtm}{{shuffle }} \nc{\rbto}{{Rota-Baxter }}
\nc{\remarks}{\noindent{\bf Remarks: }} \nc{\Rings}{\mathbf{Rings}}
\nc{\Sets}{\mathbf{Sets}} \nc{\wtot}{\widetilde{\odot}}
\nc{\wast}{\widetilde{\ast}} \nc{\bodot}{\bar{\odot}}
\nc{\bast}{\bar{\ast}} \nc{\hodot}[1]{\odot^{#1}}
\nc{\hast}[1]{\ast^{#1}} \nc{\mal}{\mathcal{O}}
\nc{\tet}{\tilde{\ast}} \nc{\teot}{\tilde{\odot}}
\nc{\oex}{\overline{x}} \nc{\oey}{\overline{y}}
\nc{\oez}{\overline{z}} \nc{\oef}{\overline{f}}
\nc{\oea}{\overline{a}} \nc{\oeb}{\overline{b}}
\nc{\weast}[1]{\widetilde{\ast}^{#1}}
\nc{\weodot}[1]{\widetilde{\odot}^{#1}} \nc{\hstar}[1]{\star^{#1}}
\nc{\lae}{\langle} \nc{\rae}{\rangle}
\nc{\lf}{\lfloor}
\nc{\rf}{\rfloor}
%=======================================================================

%%% rooted trees

\def\ta1{{\scalebox{0.25}{ %%%%%%%%%%%%%%%%%%%%%%%%%%%%%%%%%\ta1
\begin{picture}(12,12)(38,-38)
\SetWidth{0.5} \SetColor{Black} \Vertex(45,-33){5.66}
\end{picture}}}}

\def\tb2{{\scalebox{0.25}{ %%%%%%%%%%%%%%%%%%%%%%%%%%%%%%%%%\tb2
\begin{picture}(12,42)(38,-38)
\SetWidth{0.5} \SetColor{Black} \Vertex(45,-3){5.66}
\SetWidth{1.0} \Line(45,-3)(45,-33) \SetWidth{0.5}
\Vertex(45,-33){5.66}
\end{picture}}}}

\def\tc3{{\scalebox{0.25}{ %%%%%%%%%%%%%%%%%%%%%%%%%%%%%%%%%\tc3
\begin{picture}(12,72)(38,-38)
\SetWidth{0.5} \SetColor{Black} \Vertex(45,27){5.66}
\SetWidth{1.0} \Line(45,27)(45,-3) \SetWidth{0.5}
\Vertex(45,-33){5.66} \SetWidth{1.0} \Line(45,-3)(45,-33)
\SetWidth{0.5} \Vertex(45,-3){5.66}
\end{picture}}}}

\def\td31{{\scalebox{0.25}{ %%%%%%%%%%%%%%%%%%%%%%%%%%%%%%%%%\td31
\begin{picture}(42,42)(23,-38)
\SetWidth{0.5} \SetColor{Black} \Vertex(45,-3){5.66}
\Vertex(30,-33){5.66} \Vertex(60,-33){5.66} \SetWidth{1.0}
\Line(45,-3)(30,-33) \Line(60,-33)(45,-3)
\end{picture}}}}

\def\te4{{\scalebox{0.25}{ %%%%%%%%%%%%%%%%%%%%%%%%%%%%%%%%%\te4
\begin{picture}(12,102)(38,-8)
\SetWidth{0.5} \SetColor{Black} \Vertex(45,57){5.66}
\Vertex(45,-3){5.66} \Vertex(45,27){5.66} \Vertex(45,87){5.66}
\SetWidth{1.0} \Line(45,57)(45,27) \Line(45,-3)(45,27)
\Line(45,57)(45,87)
\end{picture}}}}

\def\tf41{{\scalebox{0.25}{ %%%%%%%%%%%%%%%%%%%%%%%%%%%%%%%%%\tf41
\begin{picture}(42,72)(38,-8)
\SetWidth{0.5} \SetColor{Black} \Vertex(45,27){5.66}
\Vertex(45,-3){5.66} \SetWidth{1.0} \Line(45,27)(45,-3)
\SetWidth{0.5} \Vertex(60,57){5.66} \SetWidth{1.0}
\Line(45,27)(60,57) \SetWidth{0.5} \Vertex(75,27){5.66}
\SetWidth{1.0} \Line(75,27)(60,57)
\end{picture}}}}

\def\tg42{{\scalebox{0.25}{ %%%%%%%%%%%%%%%%%%%%%%%%%%%%%%%%%\tg42
\begin{picture}(42,72)(8,-8)
\SetWidth{0.5} \SetColor{Black} \Vertex(45,27){5.66}
\Vertex(45,-3){5.66} \SetWidth{1.0} \Line(45,27)(45,-3)
\SetWidth{0.5} \Vertex(15,27){5.66} \Vertex(30,57){5.66}
\SetWidth{1.0} \Line(15,27)(30,57) \Line(45,27)(30,57)
\end{picture}}}}

\def\th43{{\scalebox{0.25}{ %%%%%%%%%%%%%%%%%%%%%%%%%%%%%%%%%\th43
\begin{picture}(42,42)(8,-8)
\SetWidth{0.5} \SetColor{Black} \Vertex(45,-3){5.66}
\Vertex(15,-3){5.66} \Vertex(30,27){5.66} \SetWidth{1.0}
\Line(15,-3)(30,27) \Line(45,-3)(30,27) \Line(30,27)(30,-3)
\SetWidth{0.5} \Vertex(30,-3){5.66}
\end{picture}}}}

\def\thII43{{\scalebox{0.25}{ %%%%%%%%%%%%%%%%%%%%%%%%%%%%%%%%%\th43
\begin{picture}(72,57) (68,-128)
    \SetWidth{0.5}
    \SetColor{Black}
    \Vertex(105,-78){5.66}
    \SetWidth{1.5}
    \Line(105,-78)(75,-123)
    \Line(105,-78)(105,-123)
    \Line(105,-78)(135,-123)
    \SetWidth{0.5}
    \Vertex(75,-123){5.66}
    \Vertex(105,-123){5.66}
    \Vertex(135,-123){5.66}
  \end{picture}
  }}}

\def\thj44{{\scalebox{0.25}{ %%%%%%%%%%%%%%%%%%%%%%%%%%%%%%%%%\thj44
\begin{picture}(42,72)(8,-8)
\SetWidth{0.5} \SetColor{Black} \Vertex(30,57){5.66}
\SetWidth{1.0} \Line(30,57)(30,27) \SetWidth{0.5}
\Vertex(30,27){5.66} \SetWidth{1.0} \Line(45,-3)(30,27)
\SetWidth{0.5} \Vertex(45,-3){5.66} \Vertex(15,-3){5.66}
\SetWidth{1.0} \Line(15,-3)(30,27)
\end{picture}}}}

\def\ti5{{\scalebox{0.25}{ %%%%%%%%%%%%%%%%%%%%%%%%%%%%%%%%%\ti5
\begin{picture}(12,132)(23,-8)
\SetWidth{0.5} \SetColor{Black} \Vertex(30,117){5.66}
\SetWidth{1.0} \Line(30,117)(30,87) \SetWidth{0.5}
\Vertex(30,87){5.66} \Vertex(30,57){5.66} \Vertex(30,27){5.66}
\Vertex(30,-3){5.66} \SetWidth{1.0} \Line(30,-3)(30,27)
\Line(30,27)(30,57) \Line(30,87)(30,57)
\end{picture}}}}

\def\tj51{{\scalebox{0.25}{ %%%%%%%%%%%%%%%%%%%%%%%%%%%%%%%%%\tj51
\begin{picture}(42,102)(53,-38)
\SetWidth{0.5} \SetColor{Black} \Vertex(61,27){4.24}
\SetWidth{1.0} \Line(75,57)(90,27) \Line(60,27)(75,57)
\SetWidth{0.5} \Vertex(90,-3){5.66} \Vertex(60,27){5.66}
\Vertex(75,57){5.66} \Vertex(90,-33){5.66} \SetWidth{1.0}
\Line(90,-33)(90,-3) \Line(90,-3)(90,27) \SetWidth{0.5}
\Vertex(90,27){5.66}
\end{picture}}}}

\def\tk52{{\scalebox{0.25}{ %%%%%%%%%%%%%%%%%%%%%%%%%%%%%%%%%\tk52
\begin{picture}(42,102)(23,-8)
\SetWidth{0.5} \SetColor{Black} \Vertex(60,57){5.66}
\Vertex(45,87){5.66} \SetWidth{1.0} \Line(45,87)(60,57)
\SetWidth{0.5} \Vertex(30,57){5.66} \SetWidth{1.0}
\Line(30,57)(45,87) \SetWidth{0.5} \Vertex(30,-3){5.66}
\SetWidth{1.0} \Line(30,-3)(30,27) \SetWidth{0.5}
\Vertex(30,27){5.66} \SetWidth{1.0} \Line(30,57)(30,27)
\end{picture}}}}

\def\tl53{{\scalebox{0.25}{ %%%%%%%%%%%%%%%%%%%%%%%%%%%%%%%%%\tl53
\begin{picture}(42,102)(8,-8)
\SetWidth{0.5} \SetColor{Black} \Vertex(30,57){5.66}
\Vertex(30,27){5.66} \SetWidth{1.0} \Line(30,57)(30,27)
\SetWidth{0.5} \Vertex(30,87){5.66} \SetWidth{1.0}
\Line(30,27)(45,-3) \SetWidth{0.5} \Vertex(15,-3){5.66}
\SetWidth{1.0} \Line(15,-3)(30,27) \Line(30,57)(30,87)
\SetWidth{0.5} \Vertex(45,-3){5.66}
\end{picture}}}}

\def\tm54{{\scalebox{0.25}{ %%%%%%%%%%%%%%%%%%%%%%%%%%%%%%%%%\tm54
\begin{picture}(42,72)(8,-38)
\SetWidth{0.5} \SetColor{Black} \Vertex(30,-3){5.66}
\SetWidth{1.0} \Line(30,27)(30,-3) \Line(30,-3)(45,-33)
\SetWidth{0.5} \Vertex(15,-33){5.66} \SetWidth{1.0}
\Line(15,-33)(30,-3) \SetWidth{0.5} \Vertex(45,-33){5.66}
\SetWidth{1.0} \Line(30,-33)(30,-3) \SetWidth{0.5}
\Vertex(30,-33){5.66} \Vertex(30,27){5.66}
\end{picture}}}}

\def\tn55{{\scalebox{0.25}{ %%%%%%%%%%%%%%%%%%%%%%%%%%%%%%%%%\tn55
\begin{picture}(42,72)(8,-38)
\SetWidth{0.5} \SetColor{Black} \Vertex(15,-33){5.66}
\Vertex(45,-33){5.66} \Vertex(30,27){5.66} \SetWidth{1.0}
\Line(45,-33)(45,-3) \SetWidth{0.5} \Vertex(45,-3){5.66}
\Vertex(15,-3){5.66} \SetWidth{1.0} \Line(30,27)(45,-3)
\Line(15,-3)(30,27) \Line(15,-3)(15,-33)
\end{picture}}}}

%%%%%%%%%%%%%%%%%%%%%% math bb fonts

\nc{\QQ}{{\mathbb Q}}
\nc{\RR}{{\mathbb R}} \nc{\ZZ}{{\mathbb Z}}

%==========================================================================
%%%%%%%%%%%%%%%%%%% cal fonts

\nc{\cala}{{\mathcal A}} \nc{\calb}{{\mathcal B}}
\nc{\calc}{{\mathcal C}}
\nc{\cald}{{\mathcal D}} \nc{\cale}{{\mathcal E}}
\nc{\calf}{{\mathcal F}} \nc{\calg}{{\mathcal G}}
\nc{\calh}{{\mathcal H}} \nc{\cali}{{\mathcal I}}
\nc{\call}{{\mathcal L}} \nc{\calm}{{\mathcal M}}
\nc{\caln}{{\mathcal N}} \nc{\calo}{{\mathcal O}}
\nc{\calp}{{\mathcal P}} \nc{\calr}{{\mathcal R}}
\nc{\cals}{{\mathcal S}} \nc{\calt}{{\mathcal T}}
\nc{\calu}{{\mathcal U}} \nc{\calw}{{\mathcal W}} \nc{\calk}{{\mathcal K}}
\nc{\calx}{{\mathcal X}} \nc{\CA}{\mathcal{A}}

%==========================================================================
%%%%%%%%%%%%%%%%%%  frak fonts
\nc{\fraka}{{\mathfrak a}} \nc{\frakA}{{\mathfrak A}}
\nc{\frakb}{{\mathfrak b}} \nc{\frakB}{{\mathfrak B}}
\nc{\frakD}{{\mathfrak D}} \nc{\frakF}{\mathfrak{F}}
\nc{\frakf}{{\mathfrak f}} \nc{\frakg}{{\mathfrak g}}
\nc{\frakH}{{\mathfrak H}} \nc{\frakL}{{\mathfrak L}}
\nc{\frakM}{{\mathfrak M}} \nc{\bfrakM}{\overline{\frakM}}
\nc{\frakm}{{\mathfrak m}} \nc{\frakP}{{\mathfrak P}}
\nc{\frakN}{{\mathfrak N}} \nc{\frakp}{{\mathfrak p}}
\nc{\frakS}{{\mathfrak S}} \nc{\frakT}{\mathfrak{T}}
\nc{\frakX}{{\mathfrak X}} \nc{\frakx}{\mathfrak{x}}

\nc{\BS}{\mathbb{S
}}

\font\cyr=wncyr10 \font\cyrs=wncyr7
%=========================================================================
\nc{\li}[1]{\textcolor{red}{Li:#1}}
\nc{\tian}[1]{\textcolor{blue}{Tianjie: #1}}
\nc{\xing}[1]{\textcolor{purple}{Xing: #1}}

%%%Tianjie new command

\nc{\ID}{\mathfrak{I}} \nc{\lbar}[1]{\overline{#1}}
\nc{\bre}{{\rm b}} \nc{\sd}{\cals} \nc{\rb}{\rm RB}
\nc{\A}{\rm angularly decorated\xspace} \nc{\LL}{\rm L}
\nc{\w}{\rm wid} \nc{\arro}[1]{#1}
\nc{\ver}{\rm ver}

%\def\tb2{\includegraphics[scale=0.42]{tree2}}
%\def\tc3{\includegraphics[scale=0.42]{tree3}}

%=========================================================================

\begin{document}

\title{Bialgebra and Hopf algebra structures on free Rota-Baxter algebras}
%
%=========================================================================
\author{Xing Gao}
\address{School of Mathematics and Statistics, Key Laboratory of Applied Mathematics and Complex Systems, Lanzhou University, Lanzhou, Gansu 730000, P.\,R. China}
         \email{gaoxing@lzu.edu.cn}

\author{Li Guo}
\address{Department of Mathematics and Computer Science,
         Rutgers University,
         Newark, NJ 07102, USA}
\email{liguo@rutgers.edu}

\author{Tianjie Zhang}
\address{Department of Mathematics, Lanzhou University, Lanzhou, Gansu 730000, P.\,R. China}
         \email{tjzhangmath@aliyun.com}

%========================================================================
\date{\today}
%========================================================================
\begin{abstract}
In this paper, we obtain a canonical factorization of basis elements in free Rota-Baxter algebras built on bracketed words. This canonical factorization is applied to give a coalgebra structure on the free Rota-Baxter algebras. Together with the Rota-Baxter algebra multiplication, this coproduct gives a bialgebra structure on the free Rota-Baxter algebra of rooted forests. When the weight of the Rota-Baxter algebra is zero, we further obtain a Hopf algebra structure.
\end{abstract}

\subjclass[2010]{16W99,16S10,16T10,16W70,05E99}

\keywords{Rota-Baxter algebra, words, bracketed words, factorization, coalgebra, bialgebra, Hopf algebra}

\maketitle

\tableofcontents

\setcounter{section}{0}

\allowdisplaybreaks

%========================================================================
\section{Introduction}

The concepts of a Hopf algebra and a bialgebra originated from topology study and are built from the combination of an algebra structure and coalgebra structure on the same linear space. Their study has a long history, a very rich theory and broad applications in mathematics and physics~\mcite{Ab,CK,Sw}. An important class of Hopf algebras is built from free objects in various contexts. Classical examples include free associative algebras and the enveloping algebras of Lie algebras. Other examples have appeared in recent years. For example the free objects in the category of dendriform algebras of Loday and of tridendriform algebras of Loday and Ronco~\cite{LR}, and more generally, from splitting of the associativity~\cite{BBGN,Lo3,PBG}. Under the commutativity condition, the above free objects recover the well-known Hopf algebras of shuffles and quasi-shuffles~\cite{Ho} most notable for their applications in multiple zeta values. It is worth noting that the Connes-Kreimer Hopf algebra of rooted trees also has its algebra structure from a free object, namely free operated algebra~\cite{Guop}.

Adding to this class of Hopf algebras, we obtain in this paper bialgebras and Hopf algebras from free Rota-Baxter algebras on bracketed words~\cite{EG3,G6}.

A Rota-Baxter algebra (first known as a Baxter algebra) is an associative algebra $R$ equipped with a linear
operator $P$ that generalizes the integral operator in analysis. More precisely,
\begin{equation}
P(u)P(v)=P(uP(v))+P(P(u)v)+\lambda P(uv)\ \text{ for all } u, v\in R.
\mlabel{eq:rbo}
\end{equation}
Here $\lambda$, called the weight of the Rota-Baxter operator $P$, is a prefixed element in the base ring of the algebra $R$.

Originated from the probability study of G. Baxter~\cite{Ba} and continuing the early work of well-known mathematicians such as Atkinson~\cite{FV}, Cartier~\cite{Ca} and Rota~\cite{Ro}, the theory of Rota-Baxter algebra has been well developed in recent years, with wide range of applications, in quantum field theory,
operads, Hopf algebras, commutative algebra, combinatorics and number theory~\cite{Ag,AM,EG,EG3,EGK,G2,G4,G6,Gub,Guop,G9,ML,MY,MN}. The authors of~\cite{JZ,ML} introduced the concept of a Rota-Baxter coalgebra by dualizing the Rota-Baxter operator, as well as the multiplication.

Free commutative Rota-Baxter algebras, as constructed by mixable shuffles in~\cite{G4}, has been~\cite{EG1} equipped with a Hopf algebra structure by means of the shuffle and quasi-shuffle algebras which constitutes a major part of a free commutative Rota-Baxter algebra. This paper deals with free Rota-Baxter algebras without the commutativity condition, constructed by bracketed words~\cite{EG1,Gub}. More precisely we show that the noncommutative Rota-Baxter algebra is a bialgebra, and is a Hopf algebra when the weight of the Rota-Baxter algebra is zero.

For the purpose of defining the coproduct, we give a canonical factorization of a basis in the free Rota-Baxter algebra. It states that any basis element of the free Rota-Baxter algebra can be uniquely factorized as the product of indecomposable elements in a particular form. This factorization allows us to reduce the constructions and proofs for the bialgebra to the case of indecomposable bracketed words.

Here is the layout of this paper. In Section~\ref{sec:RBHOPHAL}, we recall the construction of a free Rota-Baxter algebra whose basis is given by bracketed words. We then obtain a unique factorization of such bracketed words (Proposition~\mref{pp:cdiam}). Making use of this factorization, we define a coproduct on a free Rota-Baxter algebra in Section~\ref{sec:bial} and verify the axioms for a bialgebra. The theorem on the bialgebra structure for free Rota-Baxter algebras is stated in Theorem~\ref{thm:main} and the proof is carried out in the next two subsections of Section~\ref{sec:bial}. In the final Section~\mref{sec:hopf}, we show that a free Rota-Baxter algebra of weight zero is a Hopf algebra and give some discussion on the non-zero weight case.

\smallskip

\noindent
{\bf Convention. } Throughout this paper, all algebras are taken to be unitary over a unitary commutative ring $\bfk$ with identity $1_\bfk$ which we often abbreviated as 1.

\section{Unique factorization in free Rota-Baxter algebras}
\label{sec:RBHOPHAL}
In this section we first recall the construction of free Rota-Baxter algebras by bracketed words. We then provide a unique factorization, called the \cdiam factorization, of basis elements in a
free Rota-Baxter algebra. For more details see~\cite{EG1,EG3,Gub}.
This factorization will be applied in the next section to obtain a
bialgebra structure on free Rota-Baxter algebras.

\begin{defn}
{\rm
A {\bf free Rota-Baxter algebra on a set $X$} is an \rba $F(X)$ with a Rota-Baxter
operator $P_X$ and a set map $j_X: X\to F(X)$ such
that, for any \rba $R$ and any set map
$f:X\to R$, there is a unique \rba homomorphism
$\free{f}: F(X)\to R$ such that $\free{f}\circ j_X=f$.
}
\end{defn}

We next recall the recursive construction of a canonical $\bfk$-basis of the free \rba as certain words, called {Rota-Baxter words}, on the set $X$.
These words are obtained from free operated monoid on $X$, whose construction we now recall.

For any set $Y$, let $M(Y)$ denote the free monoid generated by $Y$ and let
$\lc Y\rc$ denote the set $\{ \lc y\rc \, |\, y\in Y\}.$
Thus $\lc Y\rc$ is a set indexed by $Y$ but disjoint with $Y$. We recursively define a direct system
$$\{\frakM_n, \uni{i}_{n,n+1}: \frakM_n\to \frakM_{n+1} \}$$
of free monoids with injective transition maps.
We first let
$\frakM_0:=M(X)$ and then define
$\frakM_1:=M(X\cup \lc M(X)\rc)$
with $i_{0,1}$ being the natural injections
\begin{align*}
 i_{0,1}:& \frakM_0=M(X) \hookrightarrow
    \frakM_1=M(X\cup \lc \frakM_0\rc)
\end{align*}
We identify $\frakM_0$ with their images in $\frakM_1$. In particular, $1\in \frakM_0$ is sent to
$1\in \frakM_1$.

Inductively assume that $\frakM_{n}$ has been defined
for $n\geq 1$, we define
\begin{equation}
 \frakM_{n+1}:=M(X\cup \lc\frakM_{n}\rc ).
 \mlabel{eq:frakm}
 \end{equation}
Further assume that the embedding
$$i_{n-1,n}: \frakM_{n-1} \to \frakM_{n}$$
has been obtained. Then we have the injection
$$  \lc\frakM_{n-1}\rc \hookrightarrow
    \lc \frakM_{n} \rc.$$
Thus by the freeness of
$\frakM_{n}=M(X\cup \lc\frakM_{n-1}\rc)$, we have
\begin{eqnarray*}
\frakM_{n} &=& M(X\cup \lc\frakM_{n-1}\rc)\hookrightarrow
    M(X\cup \lc \frakM_{n}\rc) =\frakM_{n+1}.
\end{eqnarray*}
We finally define the monoid
$$ \frakM(X):=\dirlim \frakM_n$$ with identity $1$.
Then~\mcite{Guop,Gub}
$\bfk\,\frakM(X)$ is the free operated $\bfk$-algebra on $X$.

Let $Y$ and $Z$ be subsets of $\frakM(X)$. Define the {\bf alternating products} of $Y$ and $Z$ by
\index{alternating product}
\allowdisplaybreaks{
\begin{eqnarray}
\Lambda(Y,Z)&=&\Big( \bigcup_{r\geq 1} \big (Y\lc Z\rc \big)^r \Big) \bigcup
    \Big(\bigcup_{r\geq 0} \big (Y\lc Z\rc \big)^r  Y\Big) \notag \\
&& \bigcup \Big( \bigcup_{r\geq 1} \big( \lc Z\rc Y \big )^r \Big)
 \bigcup \Big( \bigcup_{r\geq 0} \big (\lc Z\rc Y\big )^r \lc Z\rc \Big) \bigcup \Big\{1\Big\}.
\mlabel{eq:uwords}
\end{eqnarray}
}
%where $1$ is the identity in $\frakM(X)$.
They are subsets of $\frakM(X)$.

Recursively define $$\frak X_{0}:=M(X)\,\text{ and }\,\frak X_{n}:=\Lambda(\frak X_{0},\frak X_{n-1}),n\geq1.$$
Thus $\frak X_{0}\subseteq\cdots\subseteq\frak X_{n}\subseteq\cdots.$

The elements in $\frak X_{\infty}:=\dirlim\frak X_{n} =\cup_{n\geq 0} \frak X_{n}$
are called {\bf Rota-Baxter bracketed words}(RBWs).
For a RBW $w\in\frak X_{\infty}$, we call $\dep(w):=\min\{n\mid w\in \frak X_{n}\}$ the
{\bf depth} of $w$.

\begin{lemma}\cite{Gub}
Every RBW $x\neq1$ has a unique decomposition $x=x_{1}\cdots x_{b},$
where $x_{i},1\leq i\leq b$, is alternatively in the free semigroup $S(X)$ or in $\lc\frak X_{\infty}\rc:=\{\lc w\rc\,|\,w\in \frakX_\infty\}$.
%We call $m:=\bre(x)$ the {\bf breadth} of $x$.
\end{lemma}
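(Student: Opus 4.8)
The plan is to deduce the decomposition from the free monoid structure underlying $\frakM(X)$, invoking the recursion $\frakX_n = \Lambda(\frakX_0, \frakX_{n-1})$ only to locate the bracket contents inside $\frakX_\infty$. First I would record that $\frakM(X)$ is the free monoid on the alphabet $\mathfrak{A} := X \sqcup \lc \frakM(X) \rc$: each $\frakM_{n+1} = M(X \cup \lc \frakM_n \rc)$ is free by construction and the transition maps are the natural inclusions, so the direct limit is free on $X \cup \lc \frakM(X) \rc$, the union being disjoint because $\lc \cdot \rc$ is disjoint from its argument. Hence every $x \neq 1$ in $\frakM(X)$ has a unique factorization $x = \ell_1 \cdots \ell_m$ into letters, each $\ell_i$ lying in $X$ or of the form $\lc w \rc$ with $w \in \frakM(X)$.

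For existence I would induct on $\dep(x) = n$. If $n = 0$ then $x \in S(X)$ and $b = 1$ suffices. If $n \geq 1$ then $x \in \Lambda(\frakX_0, \frakX_{n-1})$, and each of the four defining patterns presents $x$ as a product in which plain factors from $\frakX_0 = M(X)$ and brackets $\lc z \rc$ with $z \in \frakX_{n-1}$ strictly interleave. Collapsing each maximal run of $X$-letters into a single element of $S(X)$ and keeping each bracket as one factor yields $x = x_1 \cdots x_b$ with the $x_i$ alternately in $S(X)$ and in $\lc \frakX_\infty \rc$. By the induction hypothesis each bracket content $z$ is again an RBW, so indeed $\lc z \rc \in \lc \frakX_\infty \rc$.

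For uniqueness I would refine an arbitrary decomposition of the stated form down to the letter factorization above. A bracketed factor $\lc w \rc$ is a single letter of $\mathfrak{A}$, while an $S(X)$-factor is a nonempty block of consecutive $X$-letters; thus the bracket letters occupy positions of $\ell_1 \cdots \ell_m$ that depend only on $x$, and requiring the $S(X)$-factors to be the maximal $X$-blocks between consecutive brackets leaves no freedom. Uniqueness of the letter factorization then pins down $b$ and every $x_i$.

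The step I expect to be the crux is this uniqueness, namely showing the grouping into factors is forced. It rests on two inputs: the freeness of $\frakM(X)$, which makes the bracket positions intrinsic to $x$; and the maximality of the $S(X)$-runs, which forbids splitting a plain block or carrying an $X$-letter across a bracket. The recursion defining $\frakX_\infty$ enters only to guarantee that the words appearing inside brackets are themselves RBWs, so that the factorization stays within $\lc \frakX_\infty \rc$ rather than the larger $\lc \frakM(X) \rc$; verifying this containment is the one place where the induction on depth is genuinely used.
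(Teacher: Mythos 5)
The paper offers no internal proof of this lemma at all: it is imported verbatim from \cite{Gub}, so the only comparison available is with the standard argument and with the closely related uniqueness proof the paper does give. Your proposal is that standard argument, carried out correctly: realize $\frakM(X)$ as the free monoid on the alphabet $X\sqcup\lc\frakM(X)\rc$, read off the unique letter factorization of $x$, and observe that any decomposition of the stated alternating form is forced to group those letters into maximal $X$-runs separated by single bracket letters. This is also exactly the technique the paper itself uses in the uniqueness half of Proposition~\ref{pp:cdiam}, where two alternating factorizations are compared inside the free monoid $\frakM_n=M(X\cup\lc\frakM_{n-1}\rc)$, so your route and the paper's are the same in spirit. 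Two remarks. First, your induction on depth is not really needed for existence: an element of $\frakX_n=\Lambda(\frakX_0,\frakX_{n-1})$ by definition comes already written as an interleaving of elements of $M(X)$ and brackets $\lc z\rc$ with $z\in\frakX_{n-1}\subseteq\frakX_\infty$, so the bracket contents are RBWs outright. Second, and more substantively, your phrase ``strictly interleave'' conceals the one genuine delicacy: with the paper's literal definition of $\Lambda(Y,Z)$, the factors from $Y=\frakX_0=M(X)$ may equal $1$, which would permit adjacent brackets, and then both your existence step and the lemma itself would fail (a word such as $\lc z_1\rc\lc z_2\rc$ admits no alternating decomposition). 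The construction in \cite{Gub} takes these plain factors in $S(X)$, i.e.\ nonempty, and your proof tacitly relies on that reading; it is worth making this explicit, since that convention is precisely what guarantees that collapsing maximal $X$-runs yields an alternating sequence.
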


%We call such unique decomposition the {\bf standard decomposition}
%and $m:=\bre(x)$ the {\bf breadth} of $x$ respectively.
Let
$\ncsha(X):=\bfk\frak X_{\infty}$
be the \bfk-module spanned by $\frak X_{\infty}$.  To make it into a Rota-Baxter algebra, we will equip it with a product $\diamond$ and a Rata-Baxter operator $P_{X}$~\mcite{EG1,Gub}.

Let $w,w'$ be two base elements in $\frak X_{\infty}$. Define $P(w)=\lc w\rc$.
Next we define $w\diamond w'$ inductively on the sum $n:=\dep(w)+\dep(w')\geq 0$.
If $n=0$, then $w,w'\in\frak X_{0}=M(X)$ and define $w\diamond w':=xx'$, the concatenation in $M(X)$.
Suppose that $w\diamond w'$ have been defined by for all RBWs $w,w'\in\frak X_{\infty}$
with $n\leq k$ for a $k\geq0$ and consider RBWs $w,w'\in\frak X_{\infty}$ with $n=k+1$.
First assume that $\bre(w)=\bre(w')=1$. Then $w$ and $w'$ are in $S(X)\subset\frak X_{0}$ or
$\lc\frak X_{\infty}\rc$ and can not be both in $S(X)$ since $n=k+1\geq1$. We accordingly define
\begin{equation}
\begin{aligned}
w\diamond w'=\left\{\begin{array}{ll}
ww',&\text{ if }w\in S(X) \text{ or }\,w'\in S(X),\\
\lc w\diamond \lbar{w}'\rc+\lc \lbar{w}\diamond w'\rc+\lambda\lc\lbar{w}\diamond\lbar{w}'
\rc,&\text{ if }w=\lc\lbar{w}\rc,\,w'=\lc\lbar{w}'\rc\in\lc\frak X_{\infty}\rc.
\end{array}\right.
\end{aligned}
\mlabel{eq:Bdia}
\end{equation}
Here the product in the first case is by concatenation and in the second case is by the induction hypothesis on $n$.
Now assume that $\bre(w)\geq1$ or $\bre(w')\geq1$. Let $w=w_{1}\cdots w_{m}$ and $w'=w'_{1}\cdots w'_{m'}$ be the standard decompositions of $w$ and $w'$. Define
\begin{equation}
w\diamond w':=w_{1}\cdots w_{m-1} (w_{m}\diamond w'_{1})w'_{2}\cdots w'_{m'},
\mlabel{eq:cdiam}
\end{equation}
where $w_{m}\diamond w'_{1}$ is defined by Eq.~(\mref{eq:Bdia}) and the other multiplications are
given by the concatenation.

\begin{theorem}\cite{Gub}
The triple $(\bfk \frak X_{\infty}, \diamond, P)$ is the free (noncommutative)
Rota-Baxter algebra generated by $X$.
\mlabel{thm:ncfree}
\end{theorem}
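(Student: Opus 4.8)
The plan is to verify the three assertions packaged into the phrase \emph{free \rba}: that $(\bfk\frak X_\infty,\diamond,1)$ is a unital associative algebra, that $P\colon w\mapsto\lc w\rc$ is a Rota-Baxter operator of weight $\lambda$ on it, and that the natural embedding $j_X\colon X\hookrightarrow\frak X_0=M(X)$ enjoys the stated universal property among \rba s of weight $\lambda$. Unitality is immediate, since $1\in\frak X_0$ and $1\diamond w=w\diamond 1=w$ directly from Eqs.~\eqref{eq:Bdia}--\eqref{eq:cdiam}, so the weight of the proof falls on associativity, the Rota-Baxter axiom, and the universal property.

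For associativity I would prove $(u\diamond v)\diamond w=u\diamond(v\diamond w)$ for all basis elements $u,v,w\in\frak X_\infty$ by induction on the total depth $\dep(u)+\dep(v)+\dep(w)$, with an inner induction on the total breadth $\bre(u)+\bre(v)+\bre(w)$. Applying Eq.~\eqref{eq:cdiam} peels off all but the innermost factors, reducing the breadth and leaving the case $\bre(u)=\bre(v)=\bre(w)=1$, where each of $u,v,w$ lies in $S(X)$ or in $\lc\frak X_\infty\rc$. If any factor lies in $S(X)$ the relevant products are concatenations and associativity is transparent; the essential case is $u=\lc\bar u\rc$, $v=\lc\bar v\rc$, $w=\lc\bar w\rc$. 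Expanding both $(u\diamond v)\diamond w$ and $u\diamond(v\diamond w)$ by the weight-$\lambda$ rule of Eq.~\eqref{eq:Bdia} produces nine bracketed terms on each side, and every inner $\diamond$-product occurring inside the brackets has strictly smaller total depth, so the induction hypothesis matches the two expansions term by term.

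The Rota-Baxter axiom is then essentially a reading of the definition: for basis elements $u,v$ the second line of Eq.~\eqref{eq:Bdia}, applied to $P(u)=\lc u\rc$ and $P(v)=\lc v\rc$, reads $P(u)\diamond P(v)=P(u\diamond P(v))+P(P(u)\diamond v)+\lambda P(u\diamond v)$, which is exactly Eq.~\eqref{eq:rbo}; extending $\diamond$-bilinearly and $P$-linearly gives the identity on all of $\bfk\frak X_\infty$. For the universal property, given a weight-$\lambda$ \rba $(R,Q)$ and a set map $f\colon X\to R$, I would define $\bar f\colon\bfk\frak X_\infty\to R$ on basis elements by recursion on depth: on $\frak X_0=M(X)$ let $\bar f$ be the algebra map induced by $f$, set $\bar f(\lc w\rc):=Q(\bar f(w))$, and on a standard decomposition set $\bar f(w_1\cdots w_b):=\bar f(w_1)\cdots\bar f(w_b)$ (product in $R$); this is well defined by the uniqueness of the standard decomposition. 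That $\bar f\circ P=Q\circ\bar f$ and $\bar f\circ j_X=f$ hold by construction, and the one substantial point, multiplicativity $\bar f(u\diamond v)=\bar f(u)\bar f(v)$, is proved by the same double induction as for associativity, the all-brackets subcase being precisely where the Rota-Baxter identity of $(R,Q)$ is invoked.

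Uniqueness follows once one checks that $\frak X_\infty$ is generated by $X$ under $\diamond$ and $P$, which I would prove by induction on depth: a depth-zero word is a $\diamond$-product of elements of $X$, and for higher depth the standard decomposition $w=w_1\cdots w_b$ alternates between $S(X)$ and $\lc\frak X_\infty\rc$, so consecutive factors have different types, every adjacent $\diamond$ is a concatenation, and hence $w=w_1\diamond\cdots\diamond w_b$ with each bracketed factor equal to $P$ of a strictly-lower-depth word and each $S(X)$-factor a $\diamond$-product of generators. Consequently any \rba homomorphism extending $f$ is forced on all of $\frak X_\infty$, so $\bar f$ is unique. The main obstacle throughout is associativity of $\diamond$ and the parallel multiplicativity of $\bar f$: because the recursion in Eqs.~\eqref{eq:Bdia}--\eqref{eq:cdiam} branches on both breadth and bracket type, the two nested inductions must be arranged so that every product appearing after a reduction step has strictly smaller depth, or equal depth and strictly smaller breadth, and the term-by-term bookkeeping in the all-brackets case (nine summands per side, including the weight-$\lambda$ contributions) is where care is most needed.
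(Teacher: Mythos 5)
The paper offers no proof of this theorem---it is imported verbatim from the reference \cite{Gub}---so there is nothing in-paper to compare against; judged on its own, your proposal is correct and follows essentially the same route as the proof in that reference: double induction on total depth and then total breadth for associativity, the Rota--Baxter axiom read directly off Eq.~\eqref{eq:Bdia}, and the universal property via a depth-recursive construction of $\bar f$ whose multiplicativity is proved by the same induction, plus the generation argument for uniqueness. One small imprecision worth noting: in the all-brackets case the two nine-term expansions do not match literally term by term; for instance the three left-hand terms $\lc\lc u\diamond\bar v\rc\diamond\bar w\rc$, $\lc\lc\bar u\diamond v\rc\diamond\bar w\rc$, $\lambda\lc\lc\bar u\diamond\bar v\rc\diamond\bar w\rc$ must first be recombined by bilinearity into $\lc(u\diamond v)\diamond\bar w\rc$ before the induction hypothesis identifies them with the single right-hand term $\lc u\diamond(v\diamond\bar w)\rc$, and symmetrically one left-hand term equals a sum of three right-hand terms---but this is exactly the bookkeeping you flag, not a gap.
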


Throughout the rest of the paper, we write
$$\ID:=X \cup \lc \frak X_\infty\rc.
$$
We next show that each Rota-Baxter bracketed word has a unique factorization with respect to the product $\diamond$, of
a particular form specified in the following definition.

\begin{defn}
A sequence $w_1,\cdots,w_m$ from the set $\ID$ is called {\bf alternating} if
no consecutive elements in the sequence are in $\lc \frak X_\infty\rc$.
In other words, for each $1\leq i\leq m-1$, either $w_i$ or $w_{i+1}$ is in
$X$.
\mlabel{de:alt}
\end{defn}

Now we are ready to prove the main result on \cdiam factorizations.
\begin{prop}
For each $w\in \frak X_\infty$, there
exists a unique alternating sequence $w_1,\cdots,w_m$ in $\ID$ such that
\begin{equation}
w =w_{1}\,\diamond\,\cdots\,\diamond\,w_{m},
\mlabel{eq:Exist}
\end{equation}
called the {\bf \diam factorization} of $w$. We also call $\w(w):=m$ the {\bf width} of $w$.
\mlabel{pp:cdiam}
\end{prop}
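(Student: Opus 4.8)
The plan is to reduce both the existence and the uniqueness in Proposition~\mref{pp:cdiam} to a single structural fact about how $\diamond$ acts on \diam sequences. The point is that, by Eqs.~(\mref{eq:Bdia}) and~(\mref{eq:cdiam}), the product $\diamond$ coincides with concatenation in $\frakM(X)$ except through the second branch of Eq.~(\mref{eq:Bdia}), which is triggered precisely when two elements of $\lc\frak X_\infty\rc$ are multiplied together. I would therefore first establish the following claim: if $w_1,\ldots,w_m$ is an \diam sequence in $\ID$, then $w_1\diamond\cdots\diamond w_m$ equals the concatenation $w_1w_2\cdots w_m$, which is a single element of $\frak X_\infty$.

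To prove the claim I would induct on $m$, using the associativity of $\diamond$ (Theorem~\mref{thm:ncfree}) to write $w_1\diamond\cdots\diamond w_m=(w_1\diamond\cdots\diamond w_{m-1})\diamond w_m$. By the induction hypothesis the left factor is the single Rota-Baxter bracketed word $u:=w_1\cdots w_{m-1}$, whose standard decomposition has last block equal to $w_{m-1}$ when $w_{m-1}\in\lc\frak X_\infty\rc$, and equal to the maximal trailing run of $X$-letters otherwise; since the \diam hypothesis forbids $w_{m-1}$ and $w_m$ from both lying in $\lc\frak X_\infty\rc$, at least one of this last block and $w_m$ lies in $S(X)$. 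Applying Eq.~(\mref{eq:cdiam}) (or Eq.~(\mref{eq:Bdia}) directly when $u$ has a single block) localizes the computation to the product of this last block with $w_m$, and the first branch of Eq.~(\mref{eq:Bdia}) then yields concatenation. I expect this induction to be the main obstacle of the whole proposition, since it is the only place where the \diam condition is used in an essential way and where one must check that no Rota-Baxter correction term is ever produced.

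For \emph{existence}, given $w\neq1$ I would start from its unique standard decomposition $w=x_1\cdots x_b$, in which the blocks lie alternately in $S(X)$ and in $\lc\frak X_\infty\rc$, and then refine it into a sequence in $\ID$: each semigroup block $x_i=a_{i,1}\cdots a_{i,k_i}$ is split into its letters $a_{i,j}\in X$, while each bracketed block $x_i\in\lc\frak X_\infty\rc$ is kept intact. The resulting sequence is \diam, since any two bracketed blocks of the standard decomposition are separated by a semigroup block and hence by at least one letter of $X$; by the claim its $\diamond$-product is the concatenation of its entries, namely $w$. The degenerate case $w=1$ corresponds to the empty sequence with $m=0$.

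For \emph{uniqueness}, let $w=w_1\diamond\cdots\diamond w_m$ be any \diam factorization. By the claim this equals the concatenation $w_1\cdots w_m$, so grouping the maximal runs of consecutive $X$-letters among the $w_i$ into words of $S(X)$ exhibits $w$ as a product of blocks lying alternately in $S(X)$ and $\lc\frak X_\infty\rc$; the \diam condition is exactly what prevents two bracketed blocks from merging. By the uniqueness of the standard decomposition this grouped sequence must be $x_1\cdots x_b$, so the bracketed entries among the $w_i$ and the semigroup blocks they delimit are determined by $w$. Finally, since $\ID\cap S(X)=X$ consists only of single letters, each semigroup block can be refined in exactly one way, forcing the sequence $(w_i)$ to coincide with the one constructed for existence. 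This simultaneously proves uniqueness and that the width $\w(w)=m$ is well defined.
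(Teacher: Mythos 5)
Your proposal is correct and takes essentially the same route as the paper's proof: both hinge on the observation that the $\diamond$-product of an alternating sequence in $\ID$ is plain concatenation (so that it lands in $\frak X_\infty$), obtain existence by refining the standard decomposition of $w$ (splitting each $S(X)$-block into its letters), and deduce uniqueness from unique factorization of words. The only differences are in bookkeeping: you prove the concatenation claim by an explicit induction on $m$ using associativity of $\diamond$, where the paper asserts it directly from Eqs.~(\mref{eq:Bdia}) and~(\mref{eq:cdiam}), and your uniqueness step passes through the standard-decomposition lemma plus a refinement argument, where the paper appeals in one step to the freeness of the monoid $\frakM_n = M(X\cup \lc\frakM_{n-1}\rc)$.
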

We note the difference between the breadth and width of a Rota-Baxter word. For example, when $w=x_1x_2$ with $x_1, x_2\in X$, then $b(w)=1$ and $\w(w)=2$. We also note that $w_1,\cdots,w_m$ is alternating means that, for each $1\leq i\leq m-1$,  $w_i$ and $w_{i+1}$ cannot be both in $\lc \frakX_\infty\rc$. However, as showing in the last example, it is okay for $w_i$ and $w_{i+1}$ to be both in $X$.

\begin{proof}
We first prove the existence of the \diam factorization. Let $w = u_1\cdots u_b$ be the standard decomposition of $w$. So  the factors $u_1,\cdots, u_n$ are alternatively in $S(X)$ or $\lc \frakX_\infty\rc$. By expanding the factors that are in $S(X)$, we obtain $w=w_1\cdots w_m$ where $w_1,\cdots,w_m$ is an alternating sequence.
Then $w = w_1\,\diamond\,\cdots \,\diamond\, w_m$ by the definition of $\diamond$ in Eq.~(\mref{eq:Bdia}).

To verify the uniqueness, let $w_{1},\cdots,w_{m}$ and
$w'_{1},\cdots,w'_{m'}$ be two alternating sequences such that
$$w =w_{1}\,\diamond\,\cdots\,\diamond\,w_{m}
=w'_{1}\,\diamond\,\cdots\,\diamond\,w'_{m'}.$$
By Definition \mref{de:alt} and Eq.~(\mref{eq:Bdia}),
$$w_{1}\cdots w_{w} = w_{1} \diamond\cdots\diamond w_{w}
=w =  w'_{1} \diamond\cdots\diamond w'_{w'}
= w'_{1} \cdots w'_{m'}.$$
There is an $n\geq 0$ such that $w\in \frak X_n \subseteq \frakM_n$ and $w_{1}\cdots w_{w} =  w'_{1} \cdots w'_{m'}$ in $\frakM_n$. Since $\frak M_n = M(X\cup \lc \frakM_{n-1}\rc)$ is a free monoid, we have $m=m'$ and $w_{i} = w'_{i}$ for $1\leq i\leq m$.
\end{proof}

\section{The bialgebra structure on free Rota-Baxter algebras}
\mlabel{sec:bial}
In this section, we apply the \cdiam factorization in Proposition~\mref{pp:cdiam} to obtain a coproduct on $\ncsha(X)$ and show that, together with the product $\diamond$, it equips $\ncsha(X)$ with a bialgebra structure. The main Theorem~\mref{thm:main} is stated in Section~\mref{ss:main}. The proof is divided into two steps which are carried out in the remaining two subsections.

\subsection{The coproduct and the statement of the main theorem}
\mlabel{ss:main}
In this subsection, we define a coproduct on $\ncsha(X)$ and prove some of its properties.
We first recall some concepts on bialgebras~\mcite{Gub,LV}.

\begin{defn}
A {\bfk}-bialgebra is a quintuple $(H,\mu,u,\Delta,\vep)$, where $(H,\mu,u)$ is a {\bfk}-algebra and $(H,\Delta,\vep)$ is a {\bfk}-coalgebra such that $\Delta: H\rightarrow H\otimes H$ and $\vep: H\rightarrow{\bfk} $ are algebra homomorphisms. Here the product on $H\ot H$ is given by the tensor product of the one on $H$.
\end{defn}

We will construct $\Delta:\ncsha(X)\to \ncsha(X)\ot \ncsha(X)$ by defining $\Delta(w)$ for $w\in\frak X_{\infty}$ through an induction on the depth $\dep(w)$.
When $\dep(w)=0$, we have $w\in\frak X_{0}=M(X)$. We then define
\begin{equation}
\Delta(w):=1\ot1\,\text{ when }\, w=1
\mlabel{eq:Init}
\end{equation}
and
\begin{equation}
\Delta(w):=x\ot1 + 1\ot x  \,\text{ when }\, w=x\in X.
\mlabel{eq:Dbull}
\end{equation}
Applying this definition, when $ w=x_{1}\cdots x_{m}\in S(X)$ with $m\geq2$ and $x_{i}\in X$ for $1\leq i\leq m$, we set
$$\Delta(w):=\Delta(x_1)\,\diamond\, \cdots \,\diamond\, \Delta(x_{m}).$$
%Here $\Delta(x_{i}), 1\leq i\leq m$, is defined by Eq.~(\mref{eq:Dbull}).

Suppose that $\Delta(w)$ have been defined for $w\in\frak X_{\infty}$ with $\dep(w)\leq n$ and consider $w\in \frak X_{\infty}$ with $\dep(w)=n+1$. Then Proposition~\mref{pp:cdiam} gives $w= w_{1}\cdots w_{m}$ for a unique alternating sequence $w_1,\cdots, w_m, m\geq 1$.
First assume that $m=1$. Then $w$ is in $\lc\frak X_{\infty}\rc$ since $\dep(w)=n+1\geq1$. Write $w:=\lc\lbar{w}\rc$ with $\lbar{w}\in \frak X_{\infty}$. We then define
\begin{equation}
\Delta(w)=\Delta(\lc\lbar{w}\rc):=w\ot1 + (\id\ot P)\Delta(\lbar{w}).
\mlabel{eq:Tree}
\end{equation}
Here $\Delta(\lbar{w})$ is defined by the induction hypothesis.
For general $m\geq 1$, we define
\begin{equation}
\Delta(w):=\Delta(w_{1})\,\diamond\,\cdots\,\diamond\, \Delta(w_{m}),
\mlabel{eq:Forest}
\end{equation}
where $\Delta(w_{1}), \cdots, \Delta(w_{m})$ are defined in Eq.~(\mref{eq:Dbull}) or Eq.~(\mref{eq:Tree}).
By the uniqueness of the \cdiam factorization of $w$, $\Delta(w)$ is well-defined. This completes the inductive construction of $\Delta$.
\smallskip

Let $w$ be an RBW in $\frak X_{\infty}$. Define $\vep: \ncsha(X)\rightarrow\bfk$ by setting
\begin{equation}
\vep(w)=\left \{\begin{array}{ll} 1_\bfk, & \text{if } w=1, \\
0, & \text{otherwise,}
\end{array}
\right.
\mlabel{eq:vep}
\end{equation}
and extending by \bfk-linearity.
The following is our main result of this section.

\begin{theorem}
The quintuple $(\ncsha(X), \diamond, u, \Delta, \vep)$ is a {\bfk}-bialgebra, where
$$u : \bfk\rightarrow\ncsha(X), 1_{\bfk}\mapsto 1.$$
\mlabel{thm:main}
\end{theorem}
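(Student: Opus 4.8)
The plan is to take the algebra structure $(\ncsha(X),\diamond,u)$ as given by Theorem~\ref{thm:ncfree} and to verify the four remaining requirements: that $\Delta$ and $\vep$ are algebra homomorphisms and that $(\ncsha(X),\Delta,\vep)$ is a coalgebra. The unit and counit bookkeeping is immediate: $\Delta(1)=1\ot1$ and $\vep(1)=1_\bfk$ hold by \eqref{eq:Init} and \eqref{eq:vep}, and since $w\diamond w'=1$ forces $w=w'=1$, the map $\vep$ of \eqref{eq:vep} is multiplicative. Thus the content splits into two assertions, which I would treat as two separate steps: first that $\Delta$ is an algebra homomorphism $\ncsha(X)\to\ncsha(X)\ot\ncsha(X)$, and second that $\Delta$ is coassociative and counital.

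Step one, the multiplicativity of $\Delta$, is the heart of the matter, and I would prove $\Delta(w\diamond w')=\Delta(w)\diamond\Delta(w')$ by induction. On $M(X)$ the identity is built into the definition, since $\Delta$ is declared multiplicative there. For general $w=w_1\diamond\cdots\diamond w_m$ and $w'=w'_1\diamond\cdots\diamond w'_{m'}$ (Proposition~\ref{pp:cdiam}), formula \eqref{eq:Forest} makes the issue local at the junction $w_m\diamond w'_1$: if the concatenated sequence is again alternating, that is, if $w_m$ or $w'_1$ lies in $X$, then this is exactly the \cdiam factorization of $w\diamond w'$ and multiplicativity follows at once from \eqref{eq:Forest}. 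The sole obstruction is $w_m=\lc\lbar a\rc$, $w'_1=\lc\lbar b\rc$, where \eqref{eq:Bdia} expands $w_m\diamond w'_1$ into the three Rota-Baxter terms $\lc w_m\diamond\lbar b\rc+\lc\lbar a\diamond w'_1\rc+\lambda\lc\lbar a\diamond\lbar b\rc$; everything thus reduces to the single compatibility
$$\Delta\big(\lc\lbar a\rc\diamond\lc\lbar b\rc\big)=\Delta\big(\lc\lbar a\rc\big)\diamond\Delta\big(\lc\lbar b\rc\big),$$
which I would establish by a nested induction on $\dep(\lbar a)+\dep(\lbar b)$, expanding both sides through the cocycle rule \eqref{eq:Tree}, $\Delta P=P(\cdot)\ot1+(\id\ot P)\Delta$, and feeding in the induction hypothesis. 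This nested induction, in which the three merging terms interact with the recursive shape of \eqref{eq:Tree} and both tensor slots must be tracked simultaneously, is where I expect the real difficulty to lie; the Rota-Baxter relation \eqref{eq:rbo} is precisely what lets the three terms recombine and the bookkeeping close up.

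Step two is the coalgebra axioms. For the counit a short induction on $\dep(w)$ using \eqref{eq:Tree} suffices: the key point is $\vep\circ P=0$, since $\lc w\rc\neq1$ always, so the summand $(\id\ot P)\Delta(\lbar w)$ is annihilated on the counital side and one is left with $\lc\lbar w\rc\ot1\mapsto\lc\lbar w\rc$. For coassociativity I would exploit that, by Step one, all of $\Delta$, $(\Delta\ot\id)\Delta$ and $(\id\ot\Delta)\Delta$ are algebra homomorphisms; since $\ncsha(X)$ is generated under $\diamond$ by $\ID=X\cup\lc\frak X_\infty\rc$, it is enough to check $(\Delta\ot\id)\Delta=(\id\ot\Delta)\Delta$ on these generators. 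On $x\in X$ this is immediate from \eqref{eq:Dbull}, as $x$ is primitive, while on a generator $\lc\lbar w\rc$ the cocycle identity \eqref{eq:Tree} reduces the claim to coassociativity of $\Delta$ on $\lbar w$, which has strictly smaller depth, so an induction on $\dep$ closes the argument in the by-now-standard manner of the Connes--Kreimer rooted-tree coproduct. This completes the verification that $(\ncsha(X),\diamond,u,\Delta,\vep)$ is a bialgebra.
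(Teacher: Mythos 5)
Your proposal is correct and is essentially the paper's own proof: the paper likewise proves first that $\Delta$ and $\vep$ are algebra homomorphisms (Proposition~\ref{prop:calgh} and Lemma~\ref{lem:counitprod}), with the entire difficulty concentrated in the bracket--bracket case $\lc\lbar{u}\rc\diamond\lc\lbar{v}\rc$, resolved by induction on total depth through the cocycle formula Eq.~(\ref{eq:Tree}) and the expansion Eq.~(\ref{eq:Bdia}), and only then verifies coassociativity and counitality (Proposition~\ref{pp:them1}) by induction on depth, where your ``check on the generators $\ID$ since both iterated coproducts are algebra maps'' is just a cleaner packaging of the paper's inner induction on width. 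The one point to nail down in a write-up is that in the bracket--bracket step the induction hypothesis must be multiplicativity for \emph{all} pairs of smaller total depth $\dep(w)+\dep(w')$, not merely bracket--bracket pairs, since Eq.~(\ref{eq:Bdia}) produces the mixed products $\lbar{u}\diamond\lc\lbar{v}\rc$ and $\lc\lbar{u}\rc\diamond\lbar{v}$; this is exactly how the paper's induction is organized.
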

\begin{proof}
By Theorem~\mref{thm:ncfree}, the triple $(\ncsha(X),\diamond,u)$ is a unitary $\bfk$-algebra.
Instead of verifying that the triple $(\ncsha(X),\Delta,\vep)$ is a coalgebra directly, we first verify the compatibility conditions of the coproduct $\Delta$ and counit $\vep$ with the product $\diamond$ and the unit $u$, that is, both $\Delta$ and $\vep$ are algebra homomorphisms.

The compatibility conditions with $u$ are easily verified by Eq.~(\mref{eq:Init}). The proofs that $\Delta$ and $\vep$ are compatible with the multiplication $\,\diamond\,$ are given in Proposition~\mref{prop:calgh}
and Lemma~\mref{lem:counitprod} in Section~\mref{ss:comp}, after providing some preliminary results in the rest of this subsection.

With the help of the compatibility conditions, the coalgebra condition is verified in Section~\mref{ss:coalg} (Proposition~\ref{pp:them1}). This completes the proof of the theorem.
\end{proof}

We introduce an auxiliary  notation before stating the next result. For a positive integer $n$, denote $[n]:=\{1,\cdots,n\}$. For a subset $I=\{i_1<\cdots<i_k\}\subseteq [n]$, denote $\arro{x_I}:=x_{i_1}\cdots x_{i_k}$, with the convention that $x_\emptyset=1$.

\begin{lemma}
Let $w=x_{1},\cdots,x_{m}$ with $m\geq1$ and $x_{i}\in X$ for $1\leq i\leq m$. Then
\begin{equation}
\Delta(x)=\sum_{I\sqcup J =[m]}\arro{x_I}\ot \arro{x_J}.
\mlabel{eq:equi}
\end{equation}
\end{lemma}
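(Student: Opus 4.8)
The plan is to prove this by induction on $m$, driven by the inductive definition of $\Delta$ together with the fact that the multiplication on $\ncsha(X)\ot\ncsha(X)$ is componentwise. First I would record the two facts that run the whole computation. By the definition of $\Delta$ on $S(X)$ (the degree-zero instance of Eq.~(\mref{eq:Forest})) we have $\Delta(w)=\Delta(x_1)\diamond\cdots\diamond\Delta(x_m)$, where $\Delta(x_i)=x_i\ot 1+1\ot x_i$ by Eq.~(\mref{eq:Dbull}); and, since $\Delta$ is meant to be an algebra map, the product on the tensor square is $(a\ot b)\diamond(c\ot d)=(a\diamond c)\ot(b\diamond d)$, which on words in $X$ reduces to concatenation because $\diamond$ is concatenation on $M(X)=\frak X_0$.

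For the base case $m=1$, Eq.~(\mref{eq:Dbull}) gives $\Delta(x_1)=x_1\ot 1+1\ot x_1$, which matches the two partitions $I=\{1\},J=\emptyset$ and $I=\emptyset,J=\{1\}$ of $[1]$. For the inductive step I would invoke associativity of $\diamond$ on the tensor square (inherited from the associativity of $(\ncsha(X),\diamond)$ in Theorem~\mref{thm:ncfree}) to write $\Delta(x_1\cdots x_m)=\Delta(x_1\cdots x_{m-1})\diamond\Delta(x_m)$, apply the induction hypothesis to the first factor, and distribute $\Delta(x_m)=x_m\ot1+1\ot x_m$ over the resulting sum. Each term $x_{I'}\ot x_{J'}$ with $I'\sqcup J'=[m-1]$ then produces exactly two terms, $(x_{I'}\diamond x_m)\ot x_{J'}$ and $x_{I'}\ot(x_{J'}\diamond x_m)$.

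The key observation is that since $m$ is the largest index, $x_{I'}\diamond x_m=x_{I'\cup\{m\}}$ and $x_{J'}\diamond x_m=x_{J'\cup\{m\}}$: concatenating $x_m$ on the right appends the top index, so the slot contents remain in increasing-index order. Reindexing by $I=I'\cup\{m\},\,J=J'$ in the first family and $I=I',\,J=J'\cup\{m\}$ in the second, and noting that every partition $I\sqcup J=[m]$ arises exactly once according to whether $m\in I$ or $m\in J$, assembles the two families into $\sum_{I\sqcup J=[m]} x_I\ot x_J$, as desired.

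The computation is essentially routine bookkeeping; the one point that deserves care, and which I would state explicitly, is order-preservation. Because $\diamond$ on $X$-words is \emph{noncommutative} concatenation applied in the fixed left-to-right order $1,\dots,m$, the letters routed to a given tensor slot keep their relative order, so the slot contents are genuinely $x_I$ and $x_J$ with indices increasing, rather than some rearrangement. This is precisely why appending the top index $m$ at the end stays compatible with the convention $x_I=x_{i_1}\cdots x_{i_k}$ for $I=\{i_1<\cdots<i_k\}$, and it is the only place where the noncommutativity of $\diamond$ interacts with the combinatorics. (An alternative, equally short route avoids the induction by expanding the product $\prod_{i=1}^m(x_i\ot1+1\ot x_i)$ directly: each monomial is indexed by the set $I$ of factors contributing $x_i\ot1$, with complement $J$ contributing $1\ot x_i$, and the same order-preservation remark identifies its value as $x_I\ot x_J$.)
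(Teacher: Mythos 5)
Your proof is correct and follows essentially the same route as the paper's: induction on $m$, splitting off the last letter via $\Delta(x_1\cdots x_m)=\Delta(x_1\cdots x_{m-1})\diamond\Delta(x_m)$, applying the induction hypothesis, and using the componentwise product on the tensor square together with the fact that appending the largest index $m$ preserves the increasing-order convention for $x_I$ and $x_J$. Your explicit remark about order-preservation makes precise a point the paper passes over silently, but it is the same argument.
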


\begin{proof}
We prove Eq.~(\mref{eq:equi}) by induction on $m\geq1$.
When $m = 1$, Eq.~(\mref{eq:equi}) is valid by Eq.~(\mref{eq:Dbull}).
Assume that  Eq.~(\mref{eq:equi}) holds for $m\geq1$ and consider the case of $m+1$. Let
$w=x_{1}\cdots x_{m+1}$ with $x_{i}\in X$ for $1\leq i\leq m+1$. Then $w=x_{1}\cdots x_{m+1}$ is the standard decomposition of $x$.
Thus
\allowdisplaybreaks{
\begin{align*}
\Delta(w)&=\Delta(x_{1}\cdots x_{m+1})\\
&=\Delta(x_{1}) \,\diamond\, \cdots \,\diamond\, \Delta(x_{m+1})
\quad(\mbox{by Eq.~(\mref{eq:Forest})})\\
&=\Delta(x_{1}\cdots x_{m})\diamond \Delta(x_{m+1})
\quad(\mbox{by Eq.~(\mref{eq:Forest})})\\
&= \left(\sum_{I\sqcup J =[m]}\arro{x_I}\ot \arro{x_J}\right)
\diamond\left(x_{m+1}\ot1+1\ot x_{m+1}\right)\quad(\mbox{by the induction hypothesis})\\
&= \sum_{I\sqcup J =\, [m]} \left(x_{I}\,\diamond\, x_m \right) \ot x_{J}
+
\sum_{I\sqcup J =\, [m]} x_{I} \ot \left(x_{J} \,\diamond\,x_{m}\right)\\
&= \sum_{I\sqcup J =\, [m]} \left(x_{I}x_m \right) \ot x_{J}
+
\sum_{I\sqcup J =\, [m]} x_{I} \ot \left(x_{J}x_{m}\right)\\
&= \sum_{I\sqcup J =\, [m+1]} x_{I} \ot x_{J},
\end{align*}
}
as required.
\end{proof}

\subsection{The compatibility conditions}
\mlabel{ss:comp}
We now verify that both $\Delta$ and $\vep$ are compatible with the multiplication $\,\diamond\,$, with the compatibilities with the unit $u$ being clear. We start with a simple case.

\begin{lemma}
Let $u= x_{1} \cdots x_{p}$ and $v=y_{1} \cdots y_{q}$ with $p,q\geq 0$, $x_i,y_j\in X$, $1\leq i\leq p$ and $1\leq j\leq q$. Then
\begin{equation}
\Delta(u\,\diamond\,v)=\Delta(u)\,\diamond\,\Delta(v).
\notag
%\mlabel{eq:I1}
\end{equation}
\mlabel{lem:I1}
\end{lemma}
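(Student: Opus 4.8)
The plan is to reduce the identity to the explicit subword formula already recorded in Eq.~(\mref{eq:equi}) and then to a purely combinatorial bijection on subsets. First I would observe that since $u=x_{1}\cdots x_{p}$ and $v=y_{1}\cdots y_{q}$ both lie in $M(X)=\frak X_{0}$, their product is simply the concatenation $u\,\diamond\,v=uv=x_{1}\cdots x_{p}y_{1}\cdots y_{q}$, which again lies in $M(X)$ by the depth-zero case of Eq.~(\mref{eq:Bdia}). Writing $z:=uv=z_{1}\cdots z_{p+q}$ with $z_{i}=x_{i}$ for $1\le i\le p$ and $z_{p+i}=y_{i}$ for $1\le i\le q$, the formula of Eq.~(\mref{eq:equi}) applied to $z$ gives
\[
\Delta(u\,\diamond\,v)=\Delta(z)=\sum_{K\sqcup L=[p+q]} z_{K}\ot z_{L}.
\]

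Next I would expand the right-hand side $\Delta(u)\,\diamond\,\Delta(v)$ using Eq.~(\mref{eq:equi}) for $u$ and for $v$ separately, together with the rule that the product on $\ncsha(X)\ot\ncsha(X)$ is the tensor product of the product $\diamond$ on $\ncsha(X)$. This yields
\[
\Delta(u)\,\diamond\,\Delta(v)=\sum_{A\sqcup B=[p]}\ \sum_{C\sqcup D=[q]} (x_{A}\,\diamond\, y_{C})\ot (x_{B}\,\diamond\, y_{D}).
\]
Since $x_{A},y_{C},x_{B},y_{D}$ all lie in $M(X)$, each inner product is again a concatenation, so $x_{A}\,\diamond\, y_{C}=x_{A}y_{C}$ and $x_{B}\,\diamond\, y_{D}=x_{B}y_{D}$.

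The key step is then to match the two sums by a bijection between decompositions $K\sqcup L=[p+q]$ and pairs of decompositions $A\sqcup B=[p]$, $C\sqcup D=[q]$. I would send $K$ to the pair $A:=K\cap[p]$ and $C:=\{\,i\in[q]\mid p+i\in K\,\}$, and correspondingly $L$ to $B:=L\cap[p]$ and $D:=\{\,i\in[q]\mid p+i\in L\,\}$; this is clearly a bijection, since splitting $[p+q]$ is the same as independently splitting its first block $[p]$ and its shifted second block. Because every position of $u$ precedes every position of $v$ in $z$, selecting the positions $K$ produces the subword $z_{K}=x_{A}y_{C}$, and likewise $z_{L}=x_{B}y_{D}$. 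Hence the general term $z_{K}\ot z_{L}$ equals $x_{A}y_{C}\ot x_{B}y_{D}$, so the two sums agree term by term and the identity follows.

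The only points requiring care are bookkeeping: that the shift $i\mapsto p+i$ correctly identifies the $v$-positions, and that the letters of $z_{K}$ appear in the order $x_{A}$ followed by $y_{C}$, which holds precisely because all $[p]$-positions come before all $\{p+1,\dots,p+q\}$-positions. The boundary cases $p=0$ or $q=0$ need no separate treatment: then $u=1$ or $v=1$, and since $x_{\emptyset}=1$ and $\Delta(1)=1\ot1$, the displayed formulas degenerate correctly. I do not anticipate a genuine obstacle here, as this lemma is the base case of the general compatibility and its content is purely combinatorial.
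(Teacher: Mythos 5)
Your proposal is correct and follows essentially the same route as the paper's own proof: reduce both sides to the subset formula of Eq.~(\mref{eq:equi}), note that all products involved are concatenations in $M(X)$, and match the two sums via the identification of a decomposition of $[p+q]$ with a pair of decompositions of $[p]$ and $[q]$. The only cosmetic differences are that you make the subset bijection explicit (the paper leaves it implicit in its final equality) and you absorb the $p=0$ or $q=0$ cases into the convention $x_{\emptyset}=1$ rather than treating $u=1$ or $v=1$ separately at the outset, as the paper does.
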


\begin{proof}
If $u = 1$ or $v = 1$, without loss of generality,
let $u = 1$. Then by Eq.~(\mref{eq:Init}) $\Delta(u) = 1\ot 1$
and so
$$\Delta(u\,\diamond\,v)= \Delta(v) = \Delta(u)\,\diamond\,\Delta(v).$$
Suppose that $u \neq 1$ and $v \neq 1$.
Then $p,q\geq 1$.
We denote $z_{i}=x_{i}$ for $1\leq i\leq p$ and $z_{p+j}=y_{j}$ for $1\leq j\leq q$. Then
\allowdisplaybreaks{
\begin{align*}
u\,\diamond\,v&= (x_1\cdots x_p)
\,\diamond\, (y_1\cdots y_q) \\
&= x_1\cdots x_{p-1} (x_p \,\diamond \, y_1) y_2\cdots y_q
\quad(\text{by Eq.~(\mref{eq:cdiam})})\\
&= x_1 \cdots x_p y_1\cdots y_q = z_1 \cdots z_{p+q}.
\end{align*}
}
By Eq.~(\mref{eq:equi}), we have
\begin{equation*}
\Delta(u\,\diamond\,v)=
\sum_{I\sqcup J =\, [p+q]}
z_I\ot z_J.
\end{equation*}
Similarly,
$$\Delta(u)= \sum_{I\sqcup J =\, [p]}
x_I \ot x_J,\quad
%%%
\Delta(v)= \sum_{I'\sqcup J' = \, [q]}
y_{I'} \ot y_{J'}.
$$
Thus
\allowdisplaybreaks{
\begin{align*}
\Delta(u)\,\diamond\,\Delta(v)
&= \left(\sum_{I\sqcup J =\, [p]}
x_I \ot x_J \right) \,\diamond\,
\left( \sum_{I'\sqcup J' = \, [q]}
y_{I'} \ot y_{J'}\right)
= \sum_{I\sqcup J =\,[p]\atop I'\sqcup J' =\,[q]}
\Big( x_I \ot x_J \Big) \,\diamond\,
\Big(y_{I'}\ot  y_{J'}\Big) %\quad (\text{by linearity})
\\
&= \sum_{I\sqcup J =\,[p]\atop I'\sqcup J' =\,[q]}
\Big( x_I \,\diamond\, y_{I'} \Big) \ot
\Big(x_J \,\diamond\, y_{J'}\Big)= \sum_{I\sqcup J =\,[p]\atop I'\sqcup J' =\,[q]}
(x_I y_{I'}) \ot
(x_Jy_{J'}) \\
&= \sum_{I\sqcup J =\,[p+q]}
z_I \ot z_J = \Delta(u\,\diamond\,v),
\end{align*}
}
as required.
\end{proof}

Now we can verify the compatibility of $\Delta$ with $\diamond$.

\begin{prop}
Let $u, v\in\ncsha(X)$. Then
\begin{equation}
\Delta(u\,\diamond\, v)=\Delta(u)\,\diamond\,\Delta(v).
\mlabel{eq:Morphism}
\end{equation}
\mlabel{prop:calgh}
\end{prop}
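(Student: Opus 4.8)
The plan is to establish the identity for all pairs of basis elements $u,v\in\frakX_\infty$, the general statement then following from the $\bfk$-bilinearity of $\Delta$ and of $\diamond$. I would argue by induction on the total depth $N:=\dep(u)+\dep(v)$. The cases $u=1$ or $v=1$ are immediate from Eq.~(\mref{eq:Init}), and the base case $N=0$, where $u,v\in M(X)$, is precisely Lemma~\mref{lem:I1}. So suppose $N\geq 1$ and that $\Delta(u''\,\diamond\,v'')=\Delta(u'')\,\diamond\,\Delta(v'')$ holds for every pair with $\dep(u'')+\dep(v'')<N$.

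The first move is to localize the multiplication at the single spot where splicing the two \diam factorizations can break alternation. Write the \diam factorizations as $u=u_1\,\diamond\,\cdots\,\diamond\,u_p$ and $v=v_1\,\diamond\,\cdots\,\diamond\,v_q$, and set $a:=u_p$ and $b:=v_1$ (the boundary factors, both in $\ID$), $u':=u_1\,\diamond\,\cdots\,\diamond\,u_{p-1}$ and $v':=v_2\,\diamond\,\cdots\,\diamond\,v_q$, so that $u\,\diamond\,v=u'\,\diamond\,(a\,\diamond\,b)\,\diamond\,v'$ by associativity and Eq.~(\mref{eq:cdiam}). In the spliced sequence the only adjacency that need not be alternating is the pair $(a,b)$: if $a\in\lc\frakX_\infty\rc$ then the preceding factor $u_{p-1}$ lies in $X$, and if $b\in\lc\frakX_\infty\rc$ then the following factor $v_2$ lies in $X$. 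Using this, together with Eq.~(\mref{eq:Forest}) and the linearity of $\Delta$, I would show with no appeal to the induction hypothesis that $\Delta(u\,\diamond\,v)=\Delta(u')\,\diamond\,\Delta(a\,\diamond\,b)\,\diamond\,\Delta(v')$, whereas $\Delta(u)\,\diamond\,\Delta(v)=\Delta(u')\,\diamond\,\Delta(a)\,\diamond\,\Delta(b)\,\diamond\,\Delta(v')$ follows from Eq.~(\mref{eq:Forest}) alone. Comparing the two, the whole proposition reduces to the width-one identity $\Delta(a\,\diamond\,b)=\Delta(a)\,\diamond\,\Delta(b)$ for $a,b\in\ID$.

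For the width-one identity I would distinguish two cases. If $a\in X$ or $b\in X$, then $a\,\diamond\,b=ab$ with $(a,b)$ alternating, so $\Delta(a\,\diamond\,b)=\Delta(a)\,\diamond\,\Delta(b)$ is immediate from Eq.~(\mref{eq:Forest}) and no induction is needed. If instead $a=\lc\lbar{a}\rc$ and $b=\lc\lbar{b}\rc$ both lie in $\lc\frakX_\infty\rc$, I would expand $a\,\diamond\,b$ by the Rota-Baxter recursion Eq.~(\mref{eq:Bdia}), apply Eq.~(\mref{eq:Tree}) and linearity to each of the three resulting bracketed summands, and then invoke the induction hypothesis on the three lower products $\lbar{a}\,\diamond\,b$, $a\,\diamond\,\lbar{b}$ and $\lbar{a}\,\diamond\,\lbar{b}$, each of total depth at most $N-1$ because $\dep(\lc\lbar{a}\rc)=\dep(\lbar{a})+1$. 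Writing $\Delta(\lbar{a})=\sum \lbar{a}_{(1)}\ot \lbar{a}_{(2)}$ and $\Delta(\lbar{b})=\sum \lbar{b}_{(1)}\ot \lbar{b}_{(2)}$ in Sweedler notation and using $\Delta(a)=a\ot 1+(\id\ot P)\Delta(\lbar{a})$, both sides then expand into a common $(a\,\diamond\,b)\ot 1$ term plus several contributions carried by $\id\ot P$ on the right tensor leg.

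The hard part will be the term-by-term matching of these right-leg contributions. After expansion, the cross term $(\id\ot P)\Delta(\lbar{a})\,\diamond\,(\id\ot P)\Delta(\lbar{b})$ produces in its second leg the product $\lc\lbar{a}_{(2)}\rc\,\diamond\,\lc\lbar{b}_{(2)}\rc$, which by Eq.~(\mref{eq:Bdia}) unfolds into the three-term Rota-Baxter expansion $P(\lbar{a}_{(2)}\,\diamond\,P(\lbar{b}_{(2)}))+P(P(\lbar{a}_{(2)})\,\diamond\,\lbar{b}_{(2)})+\lambda P(\lbar{a}_{(2)}\,\diamond\,\lbar{b}_{(2)})$. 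I expect these three summands to correspond exactly to the three $P$-carrying parts arising from $(\id\ot P)[\Delta(\lbar{a})\,\diamond\,\Delta(b)+\Delta(a)\,\diamond\,\Delta(\lbar{b})+\lambda\,\Delta(\lbar{a})\,\diamond\,\Delta(\lbar{b})]$, while the two pure terms $(\id\ot P)\Delta(\lbar{a})\,\diamond\,(b\ot 1)$ and $(a\ot 1)\,\diamond\,(\id\ot P)\Delta(\lbar{b})$ account for the $\lbar{a}\,\diamond\,b$ and $a\,\diamond\,\lbar{b}$ halves. In short, the defining Rota-Baxter relation Eq.~(\mref{eq:rbo}), now operating inside the right tensor factor, is precisely what forces the two sides to coincide; carrying out this bijection of summands while ensuring the induction hypothesis is used only at strictly smaller total depth is the sole delicate step.
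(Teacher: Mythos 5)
Your proposal is correct and follows essentially the same strategy as the paper's proof: induction on $\dep(u)+\dep(v)$ with Lemma~\mref{lem:I1} as the base case, reduction of general widths to the boundary product $u_p\,\diamond\,v_1$ via the alternating factorization, Eq.~(\mref{eq:Forest}) and linearity (exactly the paper's splicing argument, valid since $u_p\,\diamond\,v_1$ is either a single alternating word or a linear combination of elements of $\lc\frakX_\infty\rc$), and the same Sweedler-notation computation matching the three Rota--Baxter terms in the right tensor leg for the case $u_p,v_1\in\lc\frakX_\infty\rc$. The only difference is organizational: the paper dresses the width reduction up as a nested induction on $\bre(u)+\bre(v)$ (whose inductive hypothesis it never really uses beyond the $m=2$ case), whereas you state it as a direct, induction-free reduction to the width-one identity, which is slightly cleaner but not a genuinely different argument.
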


\begin{proof}
Since $\Delta$ is linear and $\diamond$ is bilinear, it is sufficient to verify that Eq.~(\mref{eq:Morphism})
holds for basis elements $u, v\in \frak X_{\infty}$. For this we proceed by induction on the sum of the depths $n:=\dep(u) + \dep(v)$ of $u$ and $v$.
When $n=0$, then $\dep(u)=\dep(v) =0$. Let
$$u=x_{1}\cdots x_{p}\,\text{ and }\,v=y_{1}\cdots y_{q}.$$
Then Eq.~(\mref{eq:Morphism}) holds by Lemma~\mref{lem:I1}.

Assume that Eq.~(\mref{eq:Morphism}) holds for $n\leq k$ with $k\geq 0$ and consider $u, v\in \frak X_{\infty}$ with $n=k+1$. We next reduce to prove Eq.~(\mref{eq:Morphism}) by induction on the sum of the breadths $m:=\bre(u)+\bre(v)$ of $u$ and $v$. Then $m\geq2$.

When $m=2$, then $\bre(u)=\bre(v)=1$. By Eq.~(\mref{eq:Forest}), Eq.~(\mref{eq:Morphism}) holds whenever $u\in X$ or $v\in X$.
So we are left to consider the case when $u:=\lc\lbar{u}\rc$ and $v :=\lc\lbar{v}\rc$, where $\lbar{u},\lbar{v}\in \frak X_{\infty}$ and $n=k+1\geq1$.
Using the Sweedler notation, we write
$$\Delta(\lbar{u})=\sum_{(\lbar{u})}\lbar{u}_{(1)}\otimes \lbar{u}_{(2)} \text{ and }
\Delta(\lbar{v})=\sum_{(\lbar{v})}\lbar{v}_{(1)}\otimes \lbar{v}_{(2)}.$$
Then we obtain
\begin{align*}
&\Delta(u\,\diamond\, v) = \Delta\left(\lc\lbar{u}\rc\,\diamond\, \lc\lbar{v}\rc\right)= \Delta \lc \lbar{u}\,\diamond\, \lc\lbar{v}\rc+\lc\lbar{u}\rc\,\diamond\, \lbar{v}
+\lambda\lbar{u}\,\diamond\,\lbar{v})\rc \\
&= \lc\left(\lbar{u}\,\diamond\, \lc\lbar{v}\rc+\lc\lbar{u}\rc\,\diamond\, \lbar{v}
+\lambda\lbar{u}\,\diamond\,\lbar{v} \right)\rc\ot1
+(\id\ot P)\Delta\left(\lbar{u}\,\diamond\, \lc\lbar{v}\rc+\lc\lbar{u}\rc\,\diamond\, \lbar{v}
+\lambda\lbar{u}\,\diamond\,\lbar{v}\right)\\
&\hspace{7cm} (\text{by Eq.~(\mref{eq:Tree})})\\
&=(u\diamond v)\ot1+(\id\ot  P)
\left(\Delta(\lbar{u}) \,\diamond\,\Delta(\lc\lbar{v}\rc)+\Delta(\lc\lbar{u}\rc)\,\diamond\,\Delta(\lbar{v})
+\lambda\Delta(\lbar{u})\,\diamond\,\Delta(\lbar{v})\right)\\
&\hspace{7cm} (\text{by the induction hypothesis on~}n)\\
&=(u\,\diamond\, v)\ot1+(\id\ot  P)\bigg(\Delta(\lbar{u})
\big(\lc\lbar{v}\rc\ot1+(\id\ot  P)\Delta(\lbar{v})\big)
+\big(\lc\lbar{u}\rc\ot1\\
&\,+(\id\ot P)\Delta(\lbar{u})\big)\,\diamond\,\Delta(\lbar{v})+\lambda\Delta(\lbar{u})
\,\diamond\,\Delta(\lbar{v})\bigg)
\quad(\text{by Eq.~(\mref{eq:Tree})})\\
&=(u\,\diamond\, v)\ot1+(\id\ot  P)\bigg(\sum _{(\lbar{u})}(\lbar{u}_{(1)}
\,\diamond\, \lc\lbar{v}\rc)\ot \lbar{u}_{(2)}+ \sum_{(\lbar{u}),\,(\lbar{v}) }
(\lbar{u}_{(1)}\,\diamond\, \lbar{v}_{(1)})\ot (\lbar{u}_{(2)}\,\diamond\, \lc\lbar{v}_{(2)}\rc)\\
&\,+\sum_{(\lbar{v})}(\lc\lbar{u}\rc\,\diamond\, \lbar{v}_{(1)})\ot \lbar{v}_{(2)}
+\sum_{(\lbar{u}),\,(\lbar{v}) } (\lbar{u}_{(1)}\,\diamond\, \lbar{v}_{(1)})\ot
(\lc\lbar{u}_{(2)}\rc\,\diamond\, \lbar{v}_{(2)})+\lambda\sum_{(\lbar{u}),\,(\lbar{v}) }
(\lbar{u}_{(1)}\,\diamond\, \lbar{v}_{(1)})\ot (\lbar{u}_{(2)}\,\diamond\, \lbar{v}_{(2)})\bigg)\\
&=(u\,\diamond\, v)\ot1+\sum _{(\lbar{u})}(\lbar{u}_{(1)}\,\diamond\, \lc\lbar{v}\rc)
\ot \lc\lbar{u}_{(2)}\rc+\sum_{(\lbar{u}),\,(\lbar{v}) } (\lbar{u}_{(1)}\,\diamond\,
\lbar{v}_{(1)})\ot \lc\lbar{u}_{(2)}\,\diamond\, \lc\lbar{v}_{(2)}\rc\rc\\
&\,+\sum _{(\lbar{v})}(\lc\lbar{u}\rc\,\diamond\, \lbar{v}_{(1)})\ot \lc\lbar{v}_{(2)}\rc
+\sum_{(\lbar{u}),\,(\lbar{v}) } (\lbar{u}_{(1)}\,\diamond\, \lbar{v}_{(1)})\ot
\lc \lc\lbar{u}_{(2)}\rc\,\diamond\, \lbar{v}_{(2)}\rc+\lambda\sum_{(\lbar{u}),\,(\lbar{v}) }
(\lbar{u}_{(1)}\,\diamond\, \lbar{v}_{(1)})\ot \lc\lbar{u}_{(2)}\,\diamond\, \lbar{v}_{(2)}\rc\\
&= (u\,\diamond\, v)\ot1+\sum _{(\lbar{u})}(\lbar{u}_{(1)}\,\diamond\, \lc\lbar{v}\rc)
\ot \lc\lbar{u}_{(2)}\rc+\sum _{(\lbar{v})}(\lc\lbar{u}\rc\,\diamond\, \lbar{v}_{(1)})\ot \lc\lbar{v}_{(2)}\rc\\
&\,+\sum_{(\lbar{u}),\,(\lbar{v}) } (\lbar{u}_{(1)}\,\diamond\, \lbar{v}_{(1)})\ot \lc\lbar{u}_{(2)}\,\diamond\, \lc\lbar{v}_{(2)}\rc+\lc\lbar{u}_{(2)}\rc\,\diamond\, \lbar{v}_{(2)}
+\lambda\lbar{u}_{(2)}\,\diamond\,\lbar{v}_{(2)}\rc
\quad(\text{by the bilinearity of $\diamond$ and $\ot$})\\
&= \left(\lc\lbar{u}\rc\ot1+\sum_{(\lbar{u})}\lbar{u}_{(1)}\ot \lc\lbar{u}_{(2)}\rc\right)\,\diamond\,
\left(\lc\lbar{v}\rc\ot1+\sum_{(\lbar{v})} \lbar{v}_{(1)}\ot \lc\lbar{v}_{(2)}\rc\right)\\
&= \left( \lc\lbar{u}\rc\ot1+(\id\ot  P)
\left( \sum_{(\lbar{u})}\lbar{u}_{(1)}\ot \lbar{u}_{(2)}\right )\right)\,\diamond\,
\left(\lc\lbar{v}\rc\ot1+(\id\ot  P)\left(\sum_{(\lbar{v})}\lbar{v}_{(1)}\ot
\lbar{v}_{(2)}\right)\right )\\
&= \left( \lc\lbar{u}\rc\ot1+(\id\ot  P)\Delta(\lbar{u})\right)\,\diamond\,
\left(\lc\lbar{v}\rc\ot1+(\id\ot  P)\Delta(\lbar{v})\right)\\
&=\Delta(\lc\lbar{u}\rc)\,\diamond\,\Delta(\lc\lbar{v}\rc)
\quad(\text{by Eq.~(\mref{eq:Tree})})\\
&= \Delta(u)\,\diamond\,\Delta(v).
\end{align*}
This completes the initial step of the induction on $\bre(u)+\bre(v)$.

Assume that Eq.~(\mref{eq:Morphism})
holds for  $u,v\in \frak X_{\infty}$ with $n=k+1$ and $2\leq m\leq \ell$ for a $\ell\geq 2$ and consider the case when $u, v\in \frak X_{\infty}$ with $n=k+1$ and $m=\ell+1$. Then $m\geq3$, so either $u$ or $v$ has breadth greater than or equal to $2$. There are three cases to consider: (i) $\bre(u)\geq2,\bre(v)\geq2$; (ii) $\bre(u)\geq2,\bre(v)=1$; (iii) $\bre(u)=1,\bre(v)\geq2$.

\noindent
{\bf Case (i)}. $\bre(u)\geq2,\bre(v)\geq2$. Let $u=u_{1}\cdots u_{p}$ and  $v=v_{1}\cdots v_{q}$, where $u_{1}, \cdots,u_{p}\in \ID$ and $v_{1},\cdots ,v_{q}\in\ID$ are alternating.
Then
$$u\,\diamond\, v = u_{1}\cdots u_{p-1}(u_{p}\diamond v_{1})v_{2}\cdots v_{q}.$$
If $u_{p}\notin \lc \frak X_\infty\rc$ or  $v_1\notin \lc \frak X_\infty\rc$,
then
\begin{align*}
\Delta(u\,\diamond\, v) = & \Delta(u_{1}\,\diamond\, \cdots \,\diamond\,u_{p} \,\diamond\,
v_{1}\,\diamond\, \cdots \,\diamond\,v_{q})\\
=& \Delta(u_{1}) \,\diamond\, \cdots \,\diamond\, \Delta(u_{p}) \,\diamond\,
\Delta(v_{1})\,\diamond\, \cdots \, \Delta(\diamond\,v_{q})\\
=& \Delta(u_{1} \,\diamond\, \cdots \,\diamond\, u_{p}) \,\diamond\,
\Delta(v_{1}\,\diamond\, \cdots \, \diamond\,v_{q})\\
=& \Delta(u_{1} \cdots   u_{p}) \,\diamond\,
\Delta(v_{1}  \cdots  v_{q}) = \Delta(u) \,\diamond\, \Delta(v).
\end{align*}
Suppose $u_{p} \in \lc \frak X_\infty\rc$ and $v_1 \in \lc \frak X_\infty\rc$.
Let
$u_{p}=\lc\lbar{u}_{p}\rc$ and $v_{1}=\lc\lbar{v}_{1}\rc$ for some $\lbar{u}_{p},\lbar{v}_{1}\in \frak X_\infty$. Write
$$u_{p}\diamond v_{1}= \sum_{i} c_i \lc w_{i}\rc ,\text{ where }\, c_i\in \bfk, u_{i}\in\frak X_{\infty}.$$
Note that $u_{1}\cdots u_{p-1}\lc w_{i}\rc v_{2},\cdots,v_{q}$ is alternating.
So we have
\begin{align*}
\Delta(u\,\diamond\, v)
&=\Delta(u_{1}\cdots u_{p-1}(u_{p}\,\diamond \, v_{1})v_{2}\cdots v_{q})=\sum_{i}c_i \Delta(u_{1}\cdots u_{p-1}\lc w_{i}\rc v_{2} \cdots v_{q})\\
=& \sum_{i}c_i \Delta(u_{1} \,\diamond \, \cdots \,\diamond \, u_{p-1} \,\diamond \, \lc w_{i}\rc
 \,\diamond \, v_{2} \,\diamond \, \cdots \,\diamond \, v_{q})\\
&=\sum_{i}c_i \Delta(u_{1})\diamond\cdots\diamond \Delta(u_{p-1})\diamond\Delta(\lc w_{i}\rc)\diamond\Delta( v_{2})\diamond \cdots\diamond \Delta(v_{q})\\
&=\Delta(u_{1})\diamond\cdots\diamond \Delta(u_{p-1})\diamond\Delta(u_{p}\diamond v_{1})\diamond\Delta( v_{2})\diamond \cdots\diamond \Delta(v_{q})\\
&=\Delta(u_{1})\diamond\cdots\diamond \Delta(u_{p-1})\diamond\Delta(u_{p})\diamond \Delta(v_{1})\diamond\Delta( v_{2})\diamond \cdots\diamond \Delta(v_{q})\quad(\text{by the case when } m=2)\\
&=\Delta(u_{1}\diamond\cdots\diamond u_{p})\diamond \Delta(v_{1}\diamond\cdots\diamond v_{q})\\
&=\Delta(u)\diamond\Delta(v).
\end{align*}

The Cases (ii) and (iii) are easier than Case~(i) and can be similarly checked.

This completes the induction on $\bre(u)+\bre(v)=m$ and hence the induction on $\dep(u)+\dep(v)=n$, which in turns completes the proof of Proposition~\mref{prop:calgh}.
\end{proof}

The following result shows that $\varepsilon$ defined by Eq.~(\mref{eq:vep}) is an algebra morphism.

\begin{lemma}
Let $u, v\in\ncsha(X)$. Then
\begin{equation}
\varepsilon(u\,\diamond\, v)=\varepsilon(u)\varepsilon(v).
\mlabel{eq:Morphism'}
\end{equation}
\mlabel{lem:counitprod}
\end{lemma}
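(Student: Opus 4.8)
The plan is to reduce to basis elements and then to isolate the single fact that the product $\diamond$ cannot manufacture the empty word $1$ out of two nonidentity inputs. Since $\varepsilon$ is $\bfk$-linear and $\diamond$ is $\bfk$-bilinear, both sides of Eq.~(\mref{eq:Morphism'}) are bilinear in the pair $(u,v)$, so it suffices to establish the identity for basis elements $u, v\in \frak X_\infty$.

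First I would dispose of the cases involving the unit. If $u=1$ or $v=1$, say $u=1$, then $u\,\diamond\,v=v$ and $\varepsilon(u)=1_\bfk$, so both sides reduce to $\varepsilon(v)$; the case $v=1$ is symmetric. This leaves the case $u\neq 1$ and $v\neq 1$, where $\varepsilon(u)\varepsilon(v)=0$, so it remains only to prove $\varepsilon(u\,\diamond\,v)=0$.

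The key claim is that when $u,v\in\frak X_\infty$ are both different from $1$, the element $u\,\diamond\,v$ lies in the span of $\{w\in\frak X_\infty\mid w\neq 1\}$; equivalently, the coefficient of $1$ in $u\,\diamond\,v$ vanishes, which gives $\varepsilon(u\,\diamond\,v)=0$ at once. I would verify this directly from the recursive definition of $\diamond$ by a short case analysis on breadths. If $\bre(u)=\bre(v)=1$ and at least one of $u,v$ lies in $S(X)$, then by Eq.~(\mref{eq:Bdia}) the product $u\,\diamond\,v=uv$ is the concatenation of two nonempty words, hence a single RBW $\neq 1$. If instead $u=\lc\bar u\rc$ and $v=\lc\bar v\rc$ are both bracketed, then by Eq.~(\mref{eq:Bdia}) every term of $u\,\diamond\,v$ has the shape $\lc\,\cdot\,\rc$, and a bracketed word is never the empty word; crucially this requires no information about the inner products $\bar u\,\diamond\,v$, etc., since applying the operator $P=\lc\,\cdot\,\rc$ to any element whatsoever yields a combination of nonidentity words. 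Finally, if $\bre(u)\geq 2$ or $\bre(v)\geq 2$, writing the standard decompositions $u=u_1\cdots u_p$ and $v=v_1\cdots v_q$, Eq.~(\mref{eq:cdiam}) gives $u\,\diamond\,v=u_1\cdots u_{p-1}(u_p\,\diamond\,v_1)v_2\cdots v_q$, and the nonempty prefix $u_1\cdots u_{p-1}$ (respectively suffix $v_2\cdots v_q$) forces every RBW occurring in the expansion to begin (respectively end) with a factor of $\ID$, hence to differ from $1$.

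There is essentially no genuine obstacle here: the entire content is the observation that $\diamond$ preserves nonemptiness. The one point requiring care is the bracketed subcase, where one should resist the temptation to set up an induction on depth; the conclusion follows immediately because $P(\text{anything})$ is a combination of bracketed words, each automatically $\neq 1$. Once the claim is in hand, combining it with the unit cases completes the verification of Eq.~(\mref{eq:Morphism'}).
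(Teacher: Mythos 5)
Your proposal is correct and follows essentially the same route as the paper's proof: reduce by linearity to basis elements, dispose of the unit cases, and then use the fact that $u\,\diamond\,v$ has no component on $1$ when $u\neq 1$ and $v\neq 1$. The only difference is that the paper simply asserts this last fact ($u\,\diamond\,v\neq 1$), whereas you justify it by a case analysis on Eqs.~(\mref{eq:Bdia}) and (\mref{eq:cdiam}); that justification is a welcome addition but not a different method.
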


\begin{proof}
By the linearity of $\varepsilon$, we just need to verify Eq.~(\mref{eq:Morphism'}) for $u, v\in \frak X_\infty$.
If $u=1$ or $v=1$, without loss of generality, letting $u=1$, then
$$\varepsilon(u\,\diamond\, v)
=\varepsilon(v)
=\varepsilon(u)\varepsilon(v).$$
If $u\neq1$ and $v\neq1$, then $u\,\diamond\,v\neq1$. So by Eq.~(\mref{eq:vep}), both sides of Eq.~(\mref{eq:Morphism'}) are zero, as needed.
\end{proof}

\subsection{The coalgebra structure}
\mlabel{ss:coalg}
We now establish the coalgebra structure on $\ncsha(X)$ and hence finish the proof of Theorem~\mref{thm:main}.

\begin{prop} Let $X$ be a nonempty set.
The triple $(\ncsha(X), \Delta, \varepsilon)$ is a coalgebra.
\mlabel{pp:them1}
\end{prop}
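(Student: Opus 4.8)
The plan is to leverage that $\Delta$ and $\varepsilon$ are algebra homomorphisms for $\diamond$, as established in Proposition~\mref{prop:calgh} and Lemma~\mref{lem:counitprod}. With this in hand, every map occurring in the two coalgebra axioms is a composite of algebra homomorphisms, hence itself an algebra homomorphism: indeed $(\Delta\ot\id)\circ\Delta$ and $(\id\ot\Delta)\circ\Delta$ are algebra homomorphisms $\ncsha(X)\to\ncsha(X)^{\ot 3}$ (a tensor product of algebra homomorphisms is an algebra homomorphism for the tensor-product algebra structure), and $(\varepsilon\ot\id)\circ\Delta$, $(\id\ot\varepsilon)\circ\Delta$ are algebra homomorphisms $\ncsha(X)\to\ncsha(X)$ under the canonical algebra identifications $\bfk\ot\ncsha(X)\cong\ncsha(X)\cong\ncsha(X)\ot\bfk$. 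Since two algebra homomorphisms agreeing on a generating set agree everywhere, and since the \diam factorization of Proposition~\mref{pp:cdiam} shows that every RBW is a $\diamond$-product of elements of $\ID=X\cup\lc\frak X_\infty\rc$ so that $\ID$ generates $\ncsha(X)$ as a unital algebra, it suffices to verify coassociativity and the counit identities on $w\in\ID$. I would carry out each verification by induction on $\dep(w)$.

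For the counit I would check $(\varepsilon\ot\id)\Delta(w)=w=(\id\ot\varepsilon)\Delta(w)$. The base case $w=x\in X$ is immediate from Eq.~(\mref{eq:Dbull}) together with $\varepsilon(1)=1_\bfk$ and $\varepsilon(x)=0$. For $w=\lc\lbar{w}\rc$ I would apply $\id\ot\varepsilon$ and $\varepsilon\ot\id$ to the defining identity $\Delta(\lc\lbar{w}\rc)=\lc\lbar{w}\rc\ot1+(\id\ot P)\Delta(\lbar{w})$ of Eq.~(\mref{eq:Tree}). The governing observation is that $\varepsilon\circ P=0$, because $P(v)=\lc v\rc\neq1$ for every $v$ and hence $\varepsilon(\lc v\rc)=0$ by Eq.~(\mref{eq:vep}); applying $\id\ot\varepsilon$ therefore annihilates the second summand and returns $\lc\lbar{w}\rc=w$, while applying $\varepsilon\ot\id$ annihilates the first summand and turns the second into $P\big((\varepsilon\ot\id)\Delta(\lbar{w})\big)=P(\lbar{w})=w$, using the induction hypothesis for the counit on $\lbar{w}$, which has strictly smaller depth.

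The substantive case, and the step I expect to be the main obstacle, is coassociativity on a bracketed generator $w=\lc\lbar{w}\rc$. Writing $\Delta(\lbar{w})=\sum_{(\lbar{w})}\lbar{w}_{(1)}\ot\lbar{w}_{(2)}$ and abbreviating the iterated coproduct of $\lbar{w}$ by $\sum\lbar{w}_{(1)}\ot\lbar{w}_{(2)}\ot\lbar{w}_{(3)}$ (legitimate by the induction hypothesis of coassociativity for $\lbar{w}$), I would expand both sides using Eq.~(\mref{eq:Tree}), the linearity and multiplicativity of $\Delta$, and the identity $\Delta(1)=1\ot1$. The delicate point is bookkeeping the tensor slot in which $P$ lands: in $(\Delta\ot\id)\Delta(\lc\lbar{w}\rc)$ the operator $P$ remains attached to the rightmost factor since it sits to the right of $(\id\ot P)$, whereas in $(\id\ot\Delta)\Delta(\lc\lbar{w}\rc)$ one must re-expand $\Delta$ of the bracketed element $\lc\lbar{w}_{(2)}\rc$ through a second use of Eq.~(\mref{eq:Tree}). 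Carrying out this bookkeeping, both sides reduce to
$$\lc\lbar{w}\rc\ot1\ot1+\sum\lbar{w}_{(1)}\ot\lc\lbar{w}_{(2)}\rc\ot1+\sum\lbar{w}_{(1)}\ot\lbar{w}_{(2)}\ot\lc\lbar{w}_{(3)}\rc,$$
which establishes coassociativity on $\ID$. Together with the counit identities and the reduction to generators, this proves that $(\ncsha(X),\Delta,\varepsilon)$ is a coalgebra.
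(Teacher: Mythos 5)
Your proof is correct and is essentially the paper's own argument: both hinge on $\Delta$ and $\vep$ being algebra homomorphisms (Proposition~\mref{prop:calgh}, Lemma~\mref{lem:counitprod}), an induction on depth, and the expansion of Eq.~(\mref{eq:Tree}) on bracketed elements, with the same key facts $\vep\circ P=0$ and $\Delta(1)=1\ot 1$ driving the counit and coassociativity computations. The only difference is organizational: where you reduce to the generating set $\ID$ by observing that both sides of each axiom are algebra homomorphisms, the paper carries out the same reduction concretely as an inner induction on the width of the \diam factorization of Proposition~\mref{pp:cdiam} — the substance is identical.
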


\begin{proof}
We only need to verify the coassociativity and the counicity for $w\in \frak X_\infty$.
We first apply the induction on the depth $n:=\dep(w)$ of the $w\in \frakX_\infty$ to verify the coassociativity of
$\Delta$, namely, the equality
\begin{equation}
(\Delta\ot\id)\Delta(w)=(\id\ot\Delta)\Delta(w)\, \text{ for }\, w\in \frak X_\infty.
\mlabel{eq:coass}
\end{equation}
When $n=0$, we have $w= x_{1}\cdots x_{m}$ with $x_i\in X, 1\leq i\leq m, m\geq 0$.
By Eq.~(\mref{eq:equi}), we obtain
\begin{equation*}
\Delta(w)= \sum_{I\sqcup J = \, [m]}
x_I\ot x_J .
\end{equation*}
So we obtain
\begin{align*}
(\id\otimes\Delta)\Delta(w)&= (\id\otimes\Delta) \left( \sum_{I\sqcup J = \, [m-1]}
x_I\ot x_J\right)= \sum_{I\sqcup J = \, [m]}
x_I \ot \Delta(x_J) \\
&= \sum_{I\sqcup J = \, [m]}
x_I \ot \left(\sum_{J_1\sqcup J_2=J}x_{J_1}\ot x_{J_2}\right) = \sum_{I\sqcup J_1 \sqcup J_2 = \, [m]}
x_I \ot x_{J_1} \ot x_{J_2}.
\end{align*}
Similarly, we obtain
\begin{align*}
(\Delta\otimes\id)\Delta(w) &= (\Delta\otimes\id) \left( \sum_{I\sqcup J = \, [m]}
x_I\ot x_J\right)=  \sum_{I\sqcup J = \, [m]}
\Delta(x_I)\ot x_J \\
&= \sum_{I_1\sqcup I_2 \sqcup J = \, [m]}
x_{I_1} \ot x_{I_2} \ot x_J = (\id\otimes\Delta)\Delta(w).
\end{align*}
This completes the initial step.

Assume that $\Delta$ is coassociative for $w\in \frak X_\infty$ with $\dep(w)=n$ and consider
$w\in \frak X_\infty$ with $\dep(w)=n+1$. Let $w=w_{1}\,\diamond\,\cdots\,\diamond\,w_{m}$ be the alternating factorization of $w$ with width $m\geq1$. We next apply induction on $m$.
When $m=1$, since $n+1\geq 1$, we can write $w=P(\overline{w})\in \frak X_{n+1}$ with $\dep(\overline{w})=n$. So we have
\allowdisplaybreaks{
\begin{align*}
(\id\ot \Delta)\Delta(w)
&=(\id\ot \Delta)\Delta(P(\overline{w}))\\
&=(\id\ot\Delta)(w\ot1+(\id\ot P)\Delta(\overline{w}))
\quad (\text{by Eq.~(\mref{eq:Tree})})\\
&=w\ot1\ot 1+(\id\ot (\Delta P))\Delta(\overline{w})\quad (\text{by Eq.~(\mref{eq:Init})})\\
&=w\ot1\ot1+(\id\ot P)\Delta(\overline{w})\ot1
+(\id\ot\id\ot P)(\id\ot\Delta)\Delta(\overline{w}) \\
&\hspace{6cm} (\text{by Eq.~(\mref{eq:Tree})})\\
&=w\ot1\ot1+(\id\ot P)\Delta(\overline{w})\ot1
+(\id\ot\id\ot P)(\Delta\ot\id)\Delta(\overline{w})\\
&\hspace{6cm} (\text{by the induction hypothesis on~}n)\\
&=w\ot1\ot1+ (\id\ot P)\Delta(\overline{w})\ot1
+(\Delta\ot P)\Delta(\overline{w})\\
&= (w\ot1+(\id\ot P)\Delta(\overline{w}) )\ot1
+(\Delta\ot P)\Delta(\overline{w}) \\
&=\Delta(w)\ot1+(\Delta\ot P)\Delta(\overline{w})
\quad(\text{by Eq.~(\mref{eq:Tree})})\\
&=(\Delta\ot\id)(w\ot1+(\id\ot P)\Delta(\overline{w}))\\
&=(\Delta\ot\id)\Delta(w).\quad (\text{by Eq.~(\mref{eq:Tree})})
\end{align*}
}
To summarize, Eq.~(\mref{eq:coass}) holds for $w\in \frak X_\infty$ with $\dep(w)=n+1$ and $\w(w)=1$. Assume that
Eq.~(\mref{eq:coass}) holds for $w\in \frak X_\infty$ with $\dep(w)=n+1,\w(w)\leq m$ for $m\geq1$ and consider $w\in \frak X_\infty$ with $\dep(w)=n+1$ and $\w(w)=m+1$.
Let $w=w_1\,\diamond, \cdots\,\diamond\,w_{m+1}$ be the alternating factorization of $w$. Then we have the alternating factorization $w':=w_2\,\diamond\,\cdots\,\diamond\,w_{m+1}$ and $w=w_1\,\diamond\,\diamond w'$ with $\w(w_{1})=1, \w(w')=m$. Let
$$\Delta(w_{1}):=\sum^{}_{(w_{1})}w_{1(1)}\ot w_{1(2)}\,\text{ and }\,
\Delta(w'):=\sum^{}_{(w')}w'_{(1)}\ot w'_{(2)}.$$
By the induction hypothesis on the width, we obtain
$$(\Delta\ot\id)\Delta(w_{1})
=(\id\ot\Delta)\Delta(w_{1})\,\text{ and }\,(\Delta\ot\id)\Delta(w')
=(\id\ot\Delta)\Delta(w').$$
In other words,
\begin{equation}
%\begin{aligned}
\sum^{}_{(w_{1})}\Delta(w_{1(1)})\ot w_{1(2)}
=\sum^{}_{(w_{1})}w_{1(1)}\ot\Delta(w_{1(2)})\,\text{ and }\,
\sum^{}_{(w')}\Delta(w'_{(1)})\ot w'_{(2)}
=\sum^{}_{(w')}w'_{(1)}\ot\Delta( w'_{(2)}).
%\end{aligned}
\mlabel{eq:coass0}
\end{equation}
Then
\begin{align*}
(\id\ot\Delta)\Delta(w)
&=(\id\ot\Delta)\Delta(w_{1}\,\diamond\, w')\\
&=(\id\ot\Delta)(\Delta(w_{1})\,\diamond\,\Delta(w'))
\quad(\text{by Eq.~(\mref{eq:Morphism})})\\
&=\sum^{}_{(w_{1})}\sum^{}_{(w')}(w_{1(1)}\,\diamond\, w'_{(1)})\ot\Delta(w_{1(2)}\,\diamond\, w'_{(2)})
\quad(\text{by the linearity})\\
&=\sum^{}_{(w_{1})}\sum^{}_{(w')}(w_{1(1)}\,\diamond\, w'_{(1)})\ot(\Delta(w_{1(2)})\,\diamond\,\Delta(w'_{(2)}))
\quad(\text{by Eq.~(\mref{eq:Morphism})})\\
&=\left(\sum^{}_{(w_{1})}w_{1(1)}\ot\Delta(w_{1(2)})\right)\,\diamond\,
\left(\sum^{}_{(w')}w'_{(1)}\ot\Delta( w'_{(2)})\right).
\end{align*}

By the same argument, we obtain
$$ (\Delta\ot\id)\Delta(w) =
\left(\sum^{}_{(w_{1})}\Delta(w_{1(1)})\ot w_{1(2)}\right)\,\diamond\,
\left(\sum^{}_{(w')}\Delta( w'_{(1)})\ot w'_{(2)}\right).$$
Thus Eq.~(\mref{eq:coass}) holds by Eq.~(\mref{eq:coass0}).
This completes the inductive proof of Eq.~(\mref{eq:coass}).

We next apply the induction on the depth $n:=\dep(w)$ of $w\in \frakX_\infty$ to check the counicity conditions:
\begin{equation}
(\vep\ot\id)\Delta(w)=\beta_{\ell}(w)\,\text{ and }\, (\id\ot\vep)\Delta(w)=\beta_{r}(w),
\mlabel{eq:coun}
\end{equation}
where $\beta_\ell:\ncsha(X)\rightarrow \bfk\ot\ncsha(X)$ is given by $w\mapsto 1_{\bfk}\ot w$ and
$\beta_{r}:\ncsha(X)\rightarrow \ncsha(X)\ot\bfk $ is given by $w\mapsto w\ot 1_{\bfk}$.

When $n=0$, we have $w= x_{1} \cdots x_{m}$ with $m\geq 0$. Then
\begin{equation*}
\Delta(w)= \sum_{I\sqcup J = [m] }
x_I\ot x_J.
\end{equation*}
Thus we obtain
\begin{align*}
(\vep\ot\id)\Delta(w)&=(\vep\ot\id)
\left(\sum_{I\sqcup J = [m] }
x_I\ot x_J \right) = \sum_{I\sqcup J = [m] }
\vep(x_I)\ot x_J\\
&=\vep(1)\ot  x_{1} \cdots x_{m} \quad (\text{by Eq.~(\mref{eq:vep})}) \\
&=1_{\bfk}\ot w =\beta_\ell(F).
\end{align*}

Assume that Eq.~(\mref{eq:coun}) holds for $w\in \frak X_\infty$ with $\dep(w)=n$ and consider the case of $w\in \frak X_\infty$ with $\dep(w)=n+1$. We next reduce the width of $w$. When $w=1$, we can write $w=P(\overline{w})\in \frak X_{n+1}$ with $\dep(\overline{w})=n$.
Then
\begin{align*}
(\vep\ot \id )\Delta(w)&= (\vep\ot\id )\Delta(P(\overline{w}))\\
&= (\vep\ot \id )(w\ot 1 + (\id\ot P)\Delta(\overline{w}))
\quad(\text{by Eq.~(\mref{eq:Tree})})\\
&= (\vep\ot \id )(\id\ot P)\Delta(\overline{w})
\quad (\text{by Eq.~(\mref{eq:vep})})\\
&=(\id\ot P)(\vep\ot \id )\Delta(\overline{w})\\
&= (\id\ot P)(1_{\bfk}\ot\overline{w})
\quad( \text{by the induction hypothesis on~}n)\\
&= 1_{\bfk}\ot w =\beta_\ell(w).
\end{align*}
Assume that
Eq.~(\mref{eq:coun}) holds for $w\in \frak X_\infty$ with $\dep(w)=n+1,\w(w)\leq m$ for $m\geq1$ and consider
$w\in \frak X_\infty$ with $\dep(w)=n+1,\w(w)=m+1$. As in the proof of the coassiciativity, let $w=w_{1}\,\diamond\, w'$ be from the alternating factorization of $w$ with $\w(w_{1})=1,\w(w')=w$ and
$$\Delta(w_{1}):=\sum^{}_{(w_{1})}w_{1(1)}\ot w_{1(2)}\,\text{ and }\,
\Delta(w'):=\sum^{}_{(w')}w'_{(1)}\ot w'_{(2)}.$$
By the induction on $m$, we obtain
\begin{equation*}
(\vep\ot\id)\Delta(w_{1})=\beta_{\ell}(w_{1})\,\text{ and }\, (\vep\ot\id)\Delta(w')=\beta_{\ell}(w').
\end{equation*}
In other words,
\begin{eqnarray}
\sum^{}_{(w_{1})}\vep(w_{1(1)})\ot w_{1(2)}=1_{\bfk}\ot w_{1}\text{ and }
\sum^{}_{(w')}\vep(w'_{(1)})\ot w'_{(2)}=1_{\bfk}\ot w'.
\mlabel{eq:counit0}
\end{eqnarray}
Thus we have
\begin{align*}
(\vep\ot\id)\Delta(w)&=(\vep\ot\id)\Delta(w_{1}\,\diamond\, w')\\
&=(\vep\ot\id)(\Delta(w_{1})\,\diamond\,\Delta(w'))
\quad(\text{by Eq.~(\mref{eq:Morphism})})\\
&=\sum_{(w_{1}),\, (w')} \vep(w_{1(1)}\,\diamond\, w'_{(1)})\ot(w_{1(2)}\,\diamond\, w'_{(2)})\\
&=\sum_{(w_{1}),\,(w')}
(\vep(w_{1(1)})\vep(w'_{(1)}))\ot(w_{1(2)}\,\diamond\, w'_{(2)})\quad(\text{by Eq.~(\mref{eq:Morphism'})})\\
&=\left(\sum^{}_{(w_{1})}\vep(w_{1(1)})\ot w_{1(2)}\right)
\left(\sum^{}_{(w')}\vep(w'_{(1)})\ot w'_{(2)}\right)\\
&=(1_{\bfk}\ot w_{1})(1_{\bfk}\ot w')\quad(\text{by Eq.~(\mref{eq:counit0})})\\
&=1_{\bfk}\ot(w_{1}\,\diamond\,w')=1_{\bfk}\ot w =\beta_{\ell}(w).
\end{align*}
Similarly, we obtain that $(\id\ot\vep)\Delta(w)=\beta_{r}(w)$ holds for $w\in \frak X_\infty$, completing the proof.
\end{proof}

\section{The Hopf algebra structure}
\mlabel{sec:hopf}

In this section, we discuss the Hopf algebra structure on the free Rota-Baxter algebra $\ncsha(X)$, hereby given by \A forests. Since the conclusions depend on the weight of the Rota-Baxter algebra, we reactive the subscript $\lambda$ in $\sha_\lambda^{\rm NC}(X)$ for distinction. We show that the free Rota-Baxter algebra $\ncshao(X)$ is a connected bialgebra with grading given by a suitable degree, and hence is a Hopf algebra. However, when the weight $\lambda$ is non-zero, this grading does not give a connected bialgebra. So we cannot yet conclude that $\sha_\lambda^{\rm NC}(X)$ is a Hopf algebra. See the remarks at the end of the section.

Recall that a \bfk-bialgebra $(H,\mu,u,\Delta,\vep)$ is called a {\bf graded bialgebra} if there are {\bfk}-modules $H^{(n)}, n\geq0$, of $H$ such that
\begin{enumerate}
\item
$H=\bigoplus\limits^{\infty}_{n=0}H^{(n)}$;
\mlabel{it:It1}
\item
$H^{(p)}H^{(q)}\subseteq H^{(p+q)}$;
\mlabel{it:It2}
\item
$\Delta(H^{(n)})\subseteq\bigoplus\limits^{}_{p+q=n}H^{(p)}\otimes H^{(q)}$;
\mlabel{it:It3}
\end{enumerate}
where $p,q\geq0$. $H$ is called {\bf connected} if in addition $H^{(0)}={\bfk}$.

Let $(C,\Delta,\vep )$ and $(A, \mu, u)$ be any {\bfk}-coalgebra and {\bfk}-algebra respectively and $\Hom(C,A)$ the set of {\bfk}-linear maps from $C$ to $A$. The triple $(\Hom(C,A), \ast, u\vep)$ is called the {\bf convolution algebra}, where the {\bf convolution product} $\ast$ is defined by $f\ast g:=\mu(f\otimes g)\Delta$ for $f,g$ in $\Hom(C,A)$ for which $u\vep$ is the unit. Let $(H, \mu, u, \Delta, \vep)$ be a {\bfk}-bialgebra. A {\bfk}-linear endomorphism of $H$ is called an {\bf antipode} for $H$ if it is the inverse of $\id_{H}$ under the product $\ast$, that is, $S\ast\id_{H}=\id_{H}\ast S=u\vep$.
A {\bf Hopf algebra} is a bialgebra $H$ with an antipode $S$.

As is well-known (see~\cite{Gub}), for example),
a connected bialgebra $(H, \mu, u, \Delta, \vep)$ is a Hopf algebra.

Let $w\in \frak X_\infty$. Define
\begin{equation}
\deg(w):= \deg_P(w) + \deg_X(w),
\mlabel{eq:dDeg}
\end{equation}
where $\deg_P(w)$ (resp. $\deg_X(w)$) denotes the number of occurrences of $P$ (resp. $x\in X$) in $w$. We call $\deg(w)$ the {\bf total degree} of $w$. Then from its definition, we obtain
\begin{equation}
\deg(w)=\sum^{m}_{k=1}\deg(w_{k}),
\mlabel{eq:Deg1}
\end{equation}
for $w=w_{1} \cdots w_{m} \in \frak X_\infty$.

We now give a grading of $H_{{\rm RB}}:=\ncshao(X)$ by define
\begin{equation}
H_{{\rm RB}}^{(n)}:=\bfk\{w\in \frak X_\infty\mid \deg(\frakf)=n\}, \text{ where } n\geq0.
\mlabel{eq:grading}
\end{equation}
Then
\begin{equation}
H_{{\rm RB}}=\bigoplus\limits^{\infty}_{n=0}H_{{\rm RB}}^{(n)}\,,\, H_{{\rm RB}}^{(0)}= \bfk\,\text{ and }\,P(H_{{\rm RB}}^{(n)})\subseteq H_{{\rm RB}}^{(n+1)}.
\mlabel{eq:subset}
\end{equation}
Here the inclusion follows from $\deg(P(w)) = \deg(w)+1$ for $w\in \frak X_\infty$.

\begin{theorem}
The free Rota-Baxter algebra $H_{{\rm RB}}=\ncshao(X)$ of weight zero is a connected bialgebra and hence a Hopf algebra.
\mlabel{thm:Hfree}
\end{theorem}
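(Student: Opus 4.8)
The plan is to deduce this from the bialgebra structure already established in Theorem~\mref{thm:main}, by upgrading it to a \emph{connected graded} bialgebra and then invoking the standard fact, recalled just above, that a connected graded bialgebra automatically admits an antipode. The three grading axioms are partly in hand: Eq.~(\mref{eq:subset}) already records that $H_{\rm RB}=\bigoplus_{n\geq 0}H_{\rm RB}^{(n)}$ with $H_{\rm RB}^{(0)}=\bfk$, so $H_{\rm RB}$ is graded as a module and connected. It remains to check that the product $\diamond$ and the coproduct $\Delta$ are compatible with the grading, i.e. axioms (\mref{it:It2}) and (\mref{it:It3}) of a graded bialgebra.

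First I would verify axiom (\mref{it:It2}), namely $\deg(u\,\diamond\, v)=\deg(u)+\deg(v)$ for $u,v\in\frak X_\infty$, by induction on $\dep(u)+\dep(v)$ following the recursive definition of $\diamond$. The base case $u,v\in M(X)$ is immediate, since $\diamond$ is concatenation and $\deg$ merely counts letters. For the inductive step with $\bre(u)=\bre(v)=1$, the concatenation cases are clear, and the only delicate case is $u=\lc\bar u\rc,\ v=\lc\bar v\rc$, governed by Eq.~(\mref{eq:Bdia}). \emph{This is exactly the step where weight zero is essential}: a direct degree count shows that the two bracketed terms $\lc\bar u\,\diamond\,\lc\bar v\rc\rc$ and $\lc\lc\bar u\rc\,\diamond\,\bar v\rc$ both have degree $\deg(\bar u)+\deg(\bar v)+2=\deg(u)+\deg(v)$, whereas the weight term $\lambda\lc\bar u\,\diamond\,\bar v\rc$ has degree one lower, namely $\deg(\bar u)+\deg(\bar v)+1$. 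When $\lambda=0$ this offending term vanishes, so all surviving summands lie in the single homogeneous component $H_{\rm RB}^{(\deg(u)+\deg(v))}$. The passage to arbitrary breadth is then routine: using the standard decomposition and Eq.~(\mref{eq:cdiam}), $u\,\diamond\, v$ is a concatenation of factors together with the single breadth-one product $w_m\,\diamond\, w'_1$, so additivity of $\deg$ over concatenation (Eq.~(\mref{eq:Deg1})) reduces everything to the case already settled.

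Next I would verify axiom (\mref{it:It3}), $\Delta(H_{\rm RB}^{(n)})\subseteq\bigoplus_{p+q=n}H_{\rm RB}^{(p)}\otimes H_{\rm RB}^{(q)}$, again by induction, this time on the depth of $w\in\frak X_\infty$. Since $\Delta$ is an algebra morphism by Proposition~\mref{prop:calgh} and $\diamond$ now preserves total degree by the previous step, the induced product on $H_{\rm RB}\otimes H_{\rm RB}$ is additive in total bidegree; combined with the width factorization $\Delta(w)=\Delta(w_1)\,\diamond\,\cdots\,\diamond\,\Delta(w_m)$ of Eq.~(\mref{eq:Forest}) and the additivity of Eq.~(\mref{eq:Deg1}), this reduces the claim to the width-one generators. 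For $w=x\in X$ one checks $\Delta(x)=x\ot 1+1\ot x$ directly, both summands having total degree $1$. For $w=\lc\bar w\rc$, Eq.~(\mref{eq:Tree}) gives $\Delta(w)=w\ot 1+(\id\ot P)\Delta(\bar w)$; the first term is homogeneous of bidegree $(\deg(w),0)$, and since $\id\ot P$ raises the degree of the right tensor factor by one (using $\deg(P(\cdot))=\deg(\cdot)+1$ and $P(H_{\rm RB}^{(n)})\subseteq H_{\rm RB}^{(n+1)}$ from Eq.~(\mref{eq:subset})), the inductive hypothesis $\Delta(\bar w)\in\bigoplus_{p+q=\deg(\bar w)}H_{\rm RB}^{(p)}\ot H_{\rm RB}^{(q)}$ produces summands of total degree $\deg(\bar w)+1=\deg(w)$, as required.

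With axioms (\mref{it:It2}) and (\mref{it:It3}) in place, $(H_{\rm RB},\diamond,u,\Delta,\vep)$ is a connected graded bialgebra, and the antipode is then obtained by the standard recursive construction on a connected graded bialgebra, completing the proof that $H_{\rm RB}=\ncshao(X)$ is a Hopf algebra. The main obstacle, and the only place where the weight-zero hypothesis is genuinely used, is the degree count in axiom (\mref{it:It2}): for nonzero $\lambda$ the weight term in Eq.~(\mref{eq:Bdia}) lowers the total degree by one, so $\diamond$ fails to be homogeneous and this grading no longer makes $\sha_\lambda^{\rm NC}(X)$ a connected graded bialgebra — which is precisely why the theorem is restricted to weight zero.
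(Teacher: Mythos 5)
Your proposal is correct and takes essentially the same route as the paper: grade $\ncshao(X)$ by total degree, note connectedness from Eq.~(\mref{eq:subset}), prove that $\diamond$ and $\Delta$ respect the grading (the paper's Propositions~\mref{pp:mgrad} and~\mref{pp:cmgrad}), with the weight-zero hypothesis entering exactly where you say --- the degree-lowering $\lambda$-term of Eq.~(\mref{eq:Bdia}) --- and then invoke the standard fact that a connected graded bialgebra is a Hopf algebra. Your choice of induction parameter (depth rather than total degree, with breadth handled inside) differs slightly from the paper's double induction, but this is immaterial.
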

\begin{proof}
By Eq.~(\mref{eq:subset}), we only need to verify Items~(b) and (c) in the definition of a graded bialgebra. This will be accomplished in Proposition~\mref{pp:mgrad} and \mref{pp:cmgrad} respectively.
\end{proof}

\begin{prop}
Let $H_{{\rm RB}} = \ncshao(X)$ and $p,q\geq0$. Then
\begin{equation}
H_{{\rm RB}}^{(p)}\, \diamond\, H_{{\rm RB}}^{(q)}\subseteq  H_{{\rm RB}}^{(p+q)}.
\mlabel{eq:mgrad}
\end{equation}
\mlabel{pp:mgrad}
\end{prop}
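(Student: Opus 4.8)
The plan is to reduce, by the bilinearity of $\diamond$, to showing that for any two RBWs $u,v\in\frak X_\infty$ with $\deg(u)=p$ and $\deg(v)=q$, every RBW occurring in the expansion of $u\diamond v$ has total degree $p+q$. Two elementary facts about the total degree drive the argument. First, $\deg$ is additive under concatenation in $\frakM(X)$, since it merely counts the symbols from $X$ together with the occurrences of $P$, and concatenation neither creates nor destroys any; in particular Eq.~(\mref{eq:Deg1}) holds, and $\deg(u\diamond v)=\deg(u)+\deg(v)$ whenever $u\diamond v$ is given by concatenation. Second, $\deg(\lc w\rc)=\deg(w)+1$, which is the identity $\deg(P(w))=\deg(w)+1$ noted just after Eq.~(\mref{eq:subset}). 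I will also use that $\dep(\lc w\rc)=\dep(w)+1$, so that passing under a bracket lowers the depth by one.

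I would prove the claim by the same double induction used in Proposition~\mref{prop:calgh}: an outer induction on $n:=\dep(u)+\dep(v)$ and, for fixed $n$, an inner induction on $m:=\bre(u)+\bre(v)$. When $n=0$, both $u$ and $v$ lie in $M(X)$ and $u\diamond v$ is their concatenation, so additivity of $\deg$ gives degree $p+q$ at once. For the inner base case $m=2$ we have $\bre(u)=\bre(v)=1$; if $u$ or $v$ lies in $S(X)$ then $u\diamond v=uv$ is again a concatenation and we are done as before. The remaining subcase $u=\lc\lbar u\rc$, $v=\lc\lbar v\rc$ is exactly where the weight-zero hypothesis is essential: by Eq.~(\mref{eq:Bdia}) with $\lambda=0$,
\begin{equation*}
u\diamond v=\lc u\diamond\lbar v\rc+\lc\lbar u\diamond v\rc .
\end{equation*}
Since $\dep(\lbar v)=\dep(v)-1$ and $\dep(\lbar u)=\dep(u)-1$, both inner products $u\diamond\lbar v$ and $\lbar u\diamond v$ have depth-sum $n-1$, so the outer induction hypothesis makes each a sum of RBWs of degree $\deg(u)+\deg(\lbar v)=p+q-1$, respectively $\deg(\lbar u)+\deg(v)=p+q-1$; applying $\lc\cdot\rc$ raises each degree by $1$ to $p+q$, as required.

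For the inner inductive step $m=\ell+1\geq 3$, one of $u,v$ has breadth at least $2$, and writing the standard decompositions $u=u_1\cdots u_{m}$ and $v=v_1\cdots v_{m'}$, Eq.~(\mref{eq:cdiam}) gives
\begin{equation*}
u\diamond v=u_1\cdots u_{m-1}\,(u_m\diamond v_1)\,v_2\cdots v_{m'} .
\end{equation*}
The middle factor $u_m\diamond v_1$ has breadth-sum $2\leq\ell$ and depth-sum at most $n$, so it falls under the inner induction hypothesis (if its depth-sum equals $n$) or the outer one (if it is smaller), and is therefore a sum of RBWs of degree $\deg(u_m)+\deg(v_1)$. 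Concatenating with the surrounding factors and invoking additivity of $\deg$ (Eq.~(\mref{eq:Deg1})) yields total degree $\sum_i\deg(u_i)+\sum_j\deg(v_j)=p+q$. This closes both inductions and establishes Eq.~(\mref{eq:mgrad}).

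The one genuine obstacle, and the precise reason the statement is confined to weight zero, is the breadth-one bracket--bracket case. For general $\lambda$, Eq.~(\mref{eq:Bdia}) also contributes the term $\lambda\lc\lbar u\diamond\lbar v\rc$, whose inner product $\lbar u\diamond\lbar v$ has degree $(p-1)+(q-1)=p+q-2$, so after applying $\lc\cdot\rc$ this term lands in degree $p+q-1$ rather than $p+q$. Hence for nonzero $\lambda$ the product fails to be homogeneous, and it is exactly the vanishing of this term when $\lambda=0$ that renders the grading multiplicative; everything else is routine bookkeeping with the additivity of $\deg$.
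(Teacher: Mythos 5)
Your proof is correct and is essentially the paper's own argument: the same double induction with inner parameter the breadth sum, the same reduction via Eq.~(\mref{eq:cdiam}) and additivity of $\deg$ under concatenation, and the same decisive breadth-one bracket--bracket case, where Eq.~(\mref{eq:Bdia}) with $\lambda=0$ together with $\deg(P(w))=\deg(w)+1$ closes the induction. The only differences are cosmetic: your outer induction runs on the depth sum $\dep(u)+\dep(v)$ while the paper's runs on the degree sum $p+q$ (either parameter drops by one when a bracket is removed, so the two reductions are interchangeable), and you omit the trivial case $u=1$ or $v=1$, which the paper dispatches separately.
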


\begin{proof}
 Let $w:=w_{1} \cdots w_{m}\in H_{{\rm RB}}^{(p)}$ and
$w':=w'_{1} \cdots w'_{m'}\in H_{{\rm RB}}^{(q)}$
be two base elements in $\frak X_\infty$ with $\bre(w)=m$, $\bre(w')=m'$ and $m,m'\geq 0$.
If $m=0$ or $m'=0$, without loss of generality, letting $m=0$, then $w=1$ and so
$w\,\diamond\, w' = w' \in H_{{\rm RB}}^{(q)} = H_{{\rm RB}}^{(p+q)}$.
Suppose $m, m'\geq 1$. We now verify Eq.~(\mref{eq:mgrad}) by induction on the sum of degrees $p+q\geq 0$. When $p+q=0$, then $p=q=0$. By Eq.~(\mref{eq:subset}), we obtain that $w=w'=1$ and so $w\,\diamond\,w'=1\in H_{{\rm RB}}^{(0)}$. This finishes the initial step.

Assume that Eq.~(\mref{eq:mgrad}) holds for $w,w'$ with $p+q=s$ where $s\geq 0$ and consider the case when $p+q=s+1$. Under this assumption, we prove Eq.~(\mref{eq:mgrad}) by induction on the sum $=m+m'\geq 2$.
When $m+m'=2$, then $m=m'=1$. If $w\in X$ or $w'\in X$, without loss of generality, letting
$w\in X$, then $p=1$ and
$$w\,\diamond\, w' = w w' \in H_{{\rm RB}}^{(q+1)} = H_{{\rm RB}}^{(p+q)}.$$
So we are left to consider the case when
$$w:=w_{1}=P(\lbar{w}_{1})\,\text{ and }\, w':=w'_{1}=P(\lbar{w}'_{1}),$$
where $$\lbar{w}_{1},\lbar{w}'_{1}\in \frak X_\infty\,\text{ and }\,\deg(w_{1})+\deg(w'_{1})=s+1.$$
Then
\begin{align*}
\deg(w_{1})+\deg(\lbar{w}'_{1})&=\deg(w_{1})+\deg(w'_{1})-1=s=
\deg(\lbar{w}_{1})+\deg(w'_{1}).
\end{align*}
So by the induction hypothesis on $p+q$, we have
\begin{align*}
w_{1}\,\diamond\,\lbar{w}'_{1}
&\in H_{{\rm RB}}^{(\deg(w_{1})+\deg(\lbar{w}'_{1}))}= H_{{\rm RB}}^{(s)},\quad
\lbar{w}_{1}\,\diamond\,w'_{1}
\in H_{{\rm RB}}^{(\deg(\lbar{w}_{1})+\deg(w'_{1}))}= H_{{\rm RB}}^{(s)}.
\end{align*}
By Eq.~(\mref{eq:subset}), we further have
\begin{align*}
P(w_{1}\,\diamond\,\lbar{w}'_{1})&\in P(H_{{\rm RB}}^{(s)})\subseteq H_{{\rm RB}}^{(s+1)},\quad
P(\lbar{w}_{1}\,\diamond\,w'_{1})\in P(H_{{\rm RB}}^{(s)})\subseteq H_{{\rm RB}}^{(s+1)},
\end{align*}
and hence (by Eq.~(\mref{eq:Bdia}))
$$w\,\diamond\,w'_{1}=P(w_{1}\,\diamond\,\lbar{w}'_{1})
+P(\lbar{w}_{1}\,\diamond\,w'_{1})\in H_{{\rm RB}}^{(s+1)}.$$

Assume that Eq.~(\mref{eq:mgrad}) holds when $p+q=s+1$ and $m+m'=t\geq2$ and consider the case when $p+q=s+1$ and $m+m'=t+1\geq3$. There are three cases to consider: (i) $\bre(w)\geq 2$ and $\bre(w')\geq 2$; (ii) $\bre(w)\geq 2, \bre(w')=1$; (iii) $\bre(w)=1, \bre(w')\geq 2$.
\smallskip

\noindent
{\bf Case (i)}. $\bre(w)\geq2$ and $\bre(w')\geq2$. Let $w:=w_{1,1}w_{1,2}$ and $w':=w'_{1,1} w'_{1,2}$,
where $w_{1,1},w_{1,2},
w'_{1,1},w'_{1,2}\in \frak X_\infty$ with breadths $\bre(w_{1,2})=\bre(w'_{1,1})=1$.
Then Eq.~(\mref{eq:cdiam}) gives
$$w\,\diamond\,w' =(w_{1,1} w_{1,2})\,\diamond\,(w'_{1,1} w'_{1,2})
=w_{1,1} (w_{1,2}\,\diamond\,w'_{1,1}) w'_{1,2}.$$
By the induction hypothesis on $t$, we have
$$w_{1,2}\,\diamond\,w'_{1,1}\in H_{{\rm RB}}^{(\deg(w_{1,2})+\deg(w'_{1,1}))}.
$$
So we obtain
\begin{align*}
w\,\diamond\,w'
=w_{1,1} (w_{1,2}\,\diamond\,w'_{1,1}) w'_{1,2}
\in w_{1,1}\,H_{{\rm RB}}^{(\deg(w_{1,2})+\deg(w'_{1,1}))}\,w'_{1,2}
\subseteq H_{\rm RB}^{\deg(w_{1,1})+\deg(w_{1,2})+\deg(w'_{1,1})
+\deg(w'_{1,2})}
\end{align*}
which is $H_{\rm RB}^{\deg(w)+\deg(w')}= H_{{\rm RB}}^{(s+1)}$.

Cases (i) and (ii) can be treated in the same way. This finishes the induction on $m+m'$ and hence the proof of the proposition.
\end{proof}

\begin{prop}
Let $H_{{\rm RB}} = \ncshao(X)$ and $n\geq0$. Then
$\Delta(H_{{\rm RB}}^{(n)})\subseteq\bigoplus\limits^{}_{p+q=n}H_{{\rm RB}}^{(p)}\ot H_{{\rm RB}}^{(q)}.
$
\mlabel{pp:cmgrad}
\end{prop}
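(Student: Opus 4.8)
The plan is to verify the inclusion on $\bfk$-basis elements, since $\Delta$ is $\bfk$-linear and each $H_{{\rm RB}}^{(n)}$ is spanned by basis elements. So I would fix $w\in \frak X_\infty$ with $\deg(w)=n$ and show that every term $w_{(1)}\ot w_{(2)}$ appearing in $\Delta(w)$ satisfies $\deg(w_{(1)})+\deg(w_{(2)})=n$. Following the pattern of Proposition~\mref{pp:mgrad} and of the coassociativity argument in Proposition~\mref{pp:them1}, I would run a double induction: an outer induction on the depth $\dep(w)$ and, inside the inductive step, a secondary induction on the width $\w(w)$. The two structural inputs I will lean on are the multiplicativity of $\Delta$ (Proposition~\mref{prop:calgh}, i.e. Eq.~(\mref{eq:Morphism})) and the fact that the product $\diamond$ respects the grading (Proposition~\mref{pp:mgrad}); note that the latter is where the weight-zero hypothesis is genuinely used, so it enters the present argument only indirectly.

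For the base case $\dep(w)=0$ I would write $w=x_{1}\cdots x_{m}$ with all $x_{i}\in X$, so that $n=\deg(w)=m$ (there are no occurrences of $P$). Then Eq.~(\mref{eq:equi}) gives $\Delta(w)=\sum_{I\sqcup J=[m]}x_{I}\ot x_{J}$, and each summand has bidegree $(|I|,|J|)$ with $|I|+|J|=m=n$, exactly as required. For the width-$1$ step at depth $d+1$ I would write $w=\lc\lbar{w}\rc$ with $\dep(\lbar{w})=d$ and apply Eq.~(\mref{eq:Tree}): $\Delta(w)=w\ot 1+(\id\ot P)\Delta(\lbar{w})$. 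The first term lies in $H_{{\rm RB}}^{(n)}\ot H_{{\rm RB}}^{(0)}$ since $\deg(1)=0$. For the second term, the outer (depth) induction hypothesis places $\Delta(\lbar{w})$ in $\bigoplus_{p+q=\deg(\lbar{w})}H_{{\rm RB}}^{(p)}\ot H_{{\rm RB}}^{(q)}$; since $\deg(w)=\deg(\lbar{w})+1=n$ and $P(H_{{\rm RB}}^{(q)})\subseteq H_{{\rm RB}}^{(q+1)}$ by Eq.~(\mref{eq:subset}), applying $\id\ot P$ sends each $H_{{\rm RB}}^{(p)}\ot H_{{\rm RB}}^{(q)}$ into $H_{{\rm RB}}^{(p)}\ot H_{{\rm RB}}^{(q+1)}$ with $p+(q+1)=\deg(\lbar{w})+1=n$. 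This settles the width-$1$ case.

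For the general width step, suppose $\w(w)=m+1$ with $m\geq 1$ and take the \cdiam factorization $w=w_{1}\diamond w'$, where $\w(w_{1})=1$ and $w'=w_{2}\diamond\cdots\diamond w_{m+1}$ has width $m$. By multiplicativity, $\Delta(w)=\Delta(w_{1})\diamond\Delta(w')$. The width induction hypothesis, together with the width-$1$ analysis applied to $w_{1}$ (or the trivial case $w_{1}\in X$), gives $\Delta(w_{1})\in\bigoplus_{a+b=\deg(w_{1})}H_{{\rm RB}}^{(a)}\ot H_{{\rm RB}}^{(b)}$ and $\Delta(w')\in\bigoplus_{c+e=\deg(w')}H_{{\rm RB}}^{(c)}\ot H_{{\rm RB}}^{(e)}$. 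Because the product on $H_{{\rm RB}}\ot H_{{\rm RB}}$ is taken componentwise, Proposition~\mref{pp:mgrad} yields $(H_{{\rm RB}}^{(a)}\ot H_{{\rm RB}}^{(b)})\diamond(H_{{\rm RB}}^{(c)}\ot H_{{\rm RB}}^{(e)})\subseteq H_{{\rm RB}}^{(a+c)}\ot H_{{\rm RB}}^{(b+e)}$, and $(a+c)+(b+e)=\deg(w_{1})+\deg(w')=\deg(w)=n$ by the additivity Eq.~(\mref{eq:Deg1}). This closes both inductions.

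The computation is essentially bookkeeping, so the only point that needs care is the matching of degrees in the width-$1$ case: the operator $P$ raises total degree by exactly one (Eq.~(\mref{eq:subset})) and $\deg(\lc\lbar{w}\rc)=\deg(\lbar{w})+1$, so the degree shift introduced by $\id\ot P$ in the right tensor factor is precisely compensated. I expect no serious obstacle beyond organizing the nested inductions correctly and keeping the depth and width hypotheses straight; the genuine mathematical content—the failure of this grading in nonzero weight—is isolated in Proposition~\mref{pp:mgrad}, on which this argument rests.
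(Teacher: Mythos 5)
Your proof is correct and takes essentially the same route as the paper's: reduce to basis elements, handle pure words by Eq.~(\mref{eq:equi}), the bracket case by Eq.~(\mref{eq:Tree}) together with $P(H_{{\rm RB}}^{(q)})\subseteq H_{{\rm RB}}^{(q+1)}$ from Eq.~(\mref{eq:subset}), and the composite case by multiplicativity of $\Delta$ (Proposition~\mref{prop:calgh}) combined with the product grading (Proposition~\mref{pp:mgrad}) and degree additivity (Eq.~(\mref{eq:Deg1})). The only difference is bookkeeping: the paper's outer induction is on the total degree $\deg(w)$ with an inner induction on breadth (so in the composite case both factors have strictly smaller degree and the outer hypothesis applies directly), whereas you induct on depth and width, which forces your inner width induction to do real work; both organizations are sound.
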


\begin{proof}
We verify
\begin{equation}
\Delta(w)\subseteq\bigoplus\limits^{}_{p+q=n}H_{{\rm RB}}^{(p)}\ot H_{{\rm RB}}^{(q)},
\mlabel{eq:cmgrad1}
\end{equation}
for $w\in H_{\rm RB}$ by induction on $\deg(w)\geq 0$. When $\deg(w)=0$, then $w=1$, so Eq.~(\mref{eq:cmgrad1}) holds by Eq.~(\mref{eq:Init}). When $\deg(w)=1$, then Eq.~(\mref{eq:cmgrad1}) holds by Eq.~(\mref{eq:equi}). Assume that Eq.~(\mref{eq:cmgrad1}) holds for $ w$ with $\deg(w)=n\geq 1$ and consider $ w$ with $\deg(w)=n+1$, in which case we verify Eq.~(\mref{eq:cmgrad1}) by induction on the breadth $m:=\bre( w)$ of $ w\in H_{RB}^{(n+1)}$.

If $m=1$, then we can write $ w=P(\lbar{ w})$ for some $\lbar{ w}\in \frak X_\infty$ since $\deg( w)=n+1\geq1$. By the induction hypothesis on $n$, we have
$$\Delta(\lbar{ w})\in \bigoplus\limits^{}_{p+q=n}H_{{\rm RB}}^{(p)}\ot H_{{\rm RB}}^{(q)}.$$
By Eq.~(\mref{eq:Tree}), we have
$$\Delta( w)=\Delta(P(\lbar{ w}))= w\ot 1+(\id\ot P)\Delta(\lbar{ w})\in \bigoplus\limits^{}_{p+q=n+1}H_{{\rm RB}}^{(p)}\ot H_{{\rm RB}}^{(q)}.$$

Assume that Eq.~(\mref{eq:cmgrad1}) holds for $ w$ with $\deg(w)=n+1$ and $\bre( w)=m\geq 1$ and consider the case when $\deg(w)=n+1$ and $\bre(w)=m+1$.
From the alternating factorization of $w$, we obtain $ w=uv=u\diamond v$, where $u, v\in \frak X_\infty$ with $1\leq \bre( u),\bre(v)\leq m$.
%Note that
%%
%$$\Delta(\bullet^{2};x)=(\bullet^{2};x)\ot(\bullet;1_{\bfk})
%+(\bullet;1_{\bfk})\ot(\bullet^{2};x).$$
%
So by the induction hypothesis on $\deg(w)$, we obtain
$$\Delta(u)\in \bigoplus\limits^{}_{p+q=\deg(u)}H_{{\rm RB}}^{(p)}\ot H_{{\rm RB}}^{(q)}, \Delta(v)\in \bigoplus\limits^{}_{p'+q'=\deg(v)}H_{{\rm RB}}^{(p')}\ot H_{{\rm RB}}^{(q')}.$$
Then we have
\begin{align*}
\Delta( w)&=\Delta( w_{1})\,\diamond\, \Delta( w_{2})\\
&\in\left(\bigoplus\limits^{}_{p+q=\deg(u)}H_{{\rm RB}}^{(p)}\ot H_{{\rm RB}}^{(q)}\right)\,\diamond\,
\left(\bigoplus\limits^{}_{p'+q'=\deg(v)}H_{{\rm RB}}^{(p')}\ot H_{{\rm RB}}^{(q')}\right)\\
&\subseteq \bigoplus\limits^{}_{p+q+p'+q'=\deg(u)+\deg(v)}\left(H_{{\rm RB}}^{(p)}\,\diamond\,H_{\rm RB}^{(p')}\right)\ot \left(H_{{\rm RB}}^{(q)}\,\diamond\,H_{\rm RB}^{(q')}\right) \\
&\subseteq
\bigoplus\limits^{}_{p''+q''=n+1}H_{{\rm RB}}^{(p'')}\ot H_{{\rm RB}}^{(q'')},
\end{align*}
since
$\deg( u)+\deg(v)=\deg( w)=n+1$ by Eq.~(\mref{eq:Deg1}).
This completes the induction.
\end{proof}

We end the paper with remarks on free Rota-Baxter algebra $\sha_\lambda^{\rm NC}(X)$ of weight $\lambda\neq0$. In this case, the following example shows that conditions~(b) and (c) in the definition of a graded bialgebra both fail. Thus the method for the weight zero case cannot be applied to prove that $\sha_\lambda^{\rm NC}(X)$ is a Hopf algebra. Note that in the commutative case, the free Rota-Baxter algebra of any weight is a Hopf algebra~\cite{EG1}, thanks to the Hopf algebra structure on the quasi-shuffle algebra of Hoffman~\mcite{Ho}. Thus it is reasonable to expect that one can prove that the free (noncommutative) Rota-Baxter algebra is a Hopf algebra by some other methods.

\begin{exam}
Consider $\sha_\lambda^{\rm NC}(X)$ with $\lambda\neq 0$. Let $w:= P(1)$. Then
\begin{eqnarray*}
w\,\diamond\,w = P(1)\,\diamond \, P(1)
=2 P^2(1)  + \lambda P(1).
\end{eqnarray*}
%
%and so $\deg(P(1))=1\neq2=\deg(w)+\deg(w)$.
%
Thus condition (b) for a graded bialgebra fails for $\sha_\lambda^{\rm NC}(X)$.
%is not graded when $\lambda\neq 0$.

Further, we have
$$\Delta(w) = \Delta P(1) = P(1) \ot 1 + (\id\ot P)\Delta(1) = P(1) \ot 1 + 1\ot P(1)$$
and thus
\begin{align*}
\Delta(wxw)
=& \Delta(w\,\diamond\,x\,\diamond\,w)
=\Delta(w)\,\diamond\,\Delta(x)\,\diamond\,\Delta(w)\\
=& (P(1) \ot 1 + 1\ot P(1)) \diamond (x\ot 1 + 1\ot x) \diamond  (P(1) \ot 1 + 1\ot P(1))\\
=& (P(1)\,\diamond\,x\,\diamond\,P(1)) \ot 1 + (P(1)\,\diamond\,P(1)) \ot x +
 (x\,\diamond\,P(1)) \ot P(1) + P(1) \ot (P(1) \,\diamond\, x)\\
 &+ (P(1) \,\diamond\, x) \ot P(1) + P(1) \ot (x\,\diamond\,P(1)) + x\ot (P(1) \,\diamond\,P(1))
 + 1 \ot (P(1)\,\diamond\,x\,\diamond\,P(1))\\
 =&  (P(1) x P(1)) \ot 1 + 2P^2(1)\ot x + \lambda P(1) \ot x +
 (x P(1)) \ot P(1) + P(1) \ot (P(1)   x)\\
 &+ (P(1)   x) \ot P(1) + P(1) \ot x P(1) + 2x\ot P^2(1) + \lambda x\ot P(1)
 + 1 \ot (P(1) x P(1)).
\end{align*}
For the third and the second to the last terms, we have
$$\deg(P(1)) + \deg(x) = 1+1 =2\neq3=\deg(wxw),$$ which shows that the bialgebra $\sha_\lambda^{\rm NC}(X)$ is not cograded, that is, condition (c) for a graded bialgebra also fails.
\end{exam}

\smallskip

\noindent {\bf Acknowledgements}: This work was supported by the National Natural Science Foundation of
China (Grant No.~11371177 and 11371178) and the National Science Foundation of US (Grant No. DMS~1001855).

\end{document}